\patchcmd{\thebibliography}{\section*{\refname}}{}{}{}
\tikzset{commutative diagrams/.cd,arrow style=tikz,diagrams={>=latex'}}
\newcommand*{\rom}[1]{\expandafter\@slowromancap\romannumeral #1@}
\theoremstyle{plain}
\newtheorem{Th}{Theorem}[section]
\newtheorem{Lem}[Th]{Lemma}
\newtheorem{Cor}[Th]{Corollary}
\newtheorem{Claim}[Th]{Claim}
\theoremstyle{definition}
\newtheorem{Conj}[Th]{Conjecture}
\newtheorem{Obs}[Th]{Observation}
\theoremstyle{plain}
\theoremstyle{remark}
\numberwithin{equation}{section}
\date{}
\begin{document}

\title{On grids in point-line arrangements in the plane}
\author{Mozhgan Mirzaei\thanks{Department of Mathematics,  University of California at San Diego, La Jolla, CA, 92093 USA.  Supported by NSF grant DMS-1800746. Email:
{\tt momirzae@ucsd.edu}.}\and Andrew Suk\thanks{Department of Mathematics,  University of California at San Diego, La Jolla, CA, 92093 USA. Supported by an NSF CAREER award and an Alfred Sloan Fellowship. Email: {\tt asuk@ucsd.edu}.} }
\maketitle

\begin{abstract} 
The famous Szemer\'{e}di-Trotter theorem states that any arrangement of $n$ points and $n$ lines in the plane determines $O(n^{4/3})$ incidences, and this bound is tight. In this paper, we prove the following Tur\'an-type result for point-line incidence. Let $\mathcal{L}_a$ and $\mathcal{L}_b$ be two sets of $t$ lines in the plane and let $P=\{\ell_a \cap \ell_b :  \ell_a \in \mathcal{L}_a, \ell_b \in \mathcal{L}_b\}$ be the set of intersection points between $\mathcal{L}_a$ and $\mathcal{L}_b$.  We say that $(P, \mathcal{L}_a \cup \mathcal{L}_b)$ forms a \emph{natural $t\times t$ grid} if $|P| =t^2$, and $conv(P)$ does not contain the intersection point of some two lines in $\mathcal{L}_a$ and does not contain the intersection point of some two lines in $\mathcal{L}_b.$ For fixed $t > 1$, we show that any arrangement of $n$ points and $n$ lines in the plane that does not contain a natural $t\times t$ grid determines $O(n^{\frac{4}{3}- \varepsilon})$ incidences, where $\varepsilon  = \varepsilon(t)>0$. We also provide a construction of $n$ points and $n$ lines in the plane that does not contain a natural $2 \times 2$ grid and determines at least $\Omega({n^{1+\frac{1}{14}}})$ incidences.
\end{abstract}

\section{Introduction}
Given a finite set $P$ of points in the plane and a finite set $\mathcal{L}$ of lines in the plane, let $I(P,\mathcal{L}) =\{(p, \ell) \in P\times \mathcal{L}:   p \in \ell \}$ be the set of incidences between $P$ and $\mathcal{L}$.  The \emph{incidence graph} of $(P,\mathcal{L})$ is the bipartite graph $G = (P\cup \mathcal{L}, I)$, with vertex parts $P$ and $\mathcal{L},$ and $E(G)=I(P, \mathcal{L})$. If $|P| = m$ and $|\mathcal{L}| = n$, then the celebrated theorem of Szemer\'edi and Trotter \cite{szemeredi1983extremal} states that 
\begin{equation}\label{szt}
|I(P,\mathcal{L})| \leq O(m^{2/3}n^{2/3} + m + n).
\end{equation} 
Moreover, this bound is tight which can be seen by taking the $\sqrt{m} \times \sqrt{m}$ integer lattice and bundles of parallel "rich" lines (see \cite{pach2011combinatorial}). It is widely believed that the extremal configurations maximizing the number of incidences between $m$ points and $n$ lines in the plane exhibit some kind of lattice structure.  The main goal of this paper is to show that such extremal configurations must contain large \emph{natural grids}.

Let $P$ and $P_0$ (respectively, $\mathcal{L}$ and $\mathcal{L}_0$) be two sets of points (respectively, lines) in the plane. We say that the pairs $(P,\mathcal{L})$ and $(P_0,\mathcal{L}_0)$ are \emph{isomorphic} if their incidence graphs are isomorphic. Solymosi made the following conjecture (see page $291$ in \cite{brass2006research}).

\begin{Conj}For any set of points $P_0$ and for any set of lines $\mathcal{L}_0$ in the plane, the maximum number of incidences
between $n$ points and $n$ lines in the plane containing no
subconfiguration isomorphic to $(P_0,\mathcal{L}_0)$ is $o(n^{\frac{4}{3}}).$
\end{Conj}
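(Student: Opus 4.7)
The natural strategy is to reduce Solymosi's conjecture to the main theorem of this paper: if every fixed configuration $(P_0,\mathcal{L}_0)$ can be realized as a subconfiguration of some natural $t\times t$ grid (with $t=t(P_0,\mathcal{L}_0)$), then any arrangement avoiding $(P_0,\mathcal{L}_0)$ must also avoid the corresponding natural grid, and the main theorem immediately yields $O(n^{4/3-\varepsilon})=o(n^{4/3})$ incidences. Thus the first step is to ask: which abstract incidence structures $(P_0,\mathcal{L}_0)$ embed into natural grids?

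The obstacle to this naive reduction is that natural grids have a rigid bipartite structure: the $2t$ lines split into two families so that no three lines of the grid are concurrent at any point of $P$, and moreover the convexity condition forbids ``corner'' intersections of lines within a single family. Hence any $(P_0,\mathcal{L}_0)$ containing three concurrent lines cannot be embedded. To handle this, I would partition $\mathcal{L}_0$ into maximal classes of pairwise non-concurrent lines at points of $P_0$ and try to embed each class into one of the two families of a sufficiently large grid, introducing auxiliary ``extender'' lines as needed. This amounts to an induction on the chromatic complexity of the concurrency hypergraph of $(P_0,\mathcal{L}_0)$.

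An alternative is to induct on $|\mathcal{L}_0|$. The base $|\mathcal{L}_0|\le 2$ is trivial. For the step, apply the inductive hypothesis to $(P_0,\mathcal{L}_0\setminus\{\ell\})$ to locate, in any hypothetical $\Omega(n^{4/3})$-extremal arrangement, an excess of copies of the smaller configuration. Then count the ``extending lines'' among the $n$ lines of our arrangement that would complete each copy to a forbidden $(P_0,\mathcal{L}_0)$, using Szemer\'edi--Trotter to bound how many incidences each extending line can contribute. The main obstacle here is the lack of geometric control over how the copies of the smaller subconfiguration are distributed across $\conv(P)$: a straightforward averaging argument loses factors that prevent closing the gap to $o(n^{4/3})$. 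Any complete proof will likely require a strong structural theorem characterizing near-extremal point-line arrangements—essentially saying that $\Omega(n^{4/3-\varepsilon})$ incidences force a large portion of the arrangement to resemble a grid or a projective lattice—and developing such a structural result appears to be the principal barrier.
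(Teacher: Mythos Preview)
The statement you are attempting to prove is labelled \emph{Conjecture} in the paper, and the paper does not prove it. There is no ``paper's own proof'' to compare against: the authors state Solymosi's conjecture, note that Solymosi himself verified only the special case where $P_0$ is in general position and $\mathcal{L}_0$ consists of all connecting lines, and then prove the further special case where $(P_0,\mathcal{L}_0)$ is (isomorphic to) a $t\times t$ grid. The full conjecture remains open.

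Your proposal is not a proof either, and to your credit you essentially say so. Your first reduction is logically sound in the direction you state---if $(P_0,\mathcal{L}_0)$ embeds as a subconfiguration of some natural $t\times t$ grid, then forbidding $(P_0,\mathcal{L}_0)$ forbids that grid and Theorem~\ref{Second Corollary} applies---but, as you correctly observe, the incidence graph of a $t\times t$ grid has every point of degree exactly~$2$, so any $(P_0,\mathcal{L}_0)$ with three concurrent lines through a point of $P_0$ cannot embed. This is not a technicality to be patched by ``extender lines'' or colouring the concurrency hypergraph: it is the genuine reason the paper's method is confined to grid-type targets. Your inductive sketch on $|\mathcal{L}_0|$ likewise does not close the gap, and your concluding remark that a structural theorem for near-extremal arrangements would be needed is an accurate assessment of why the conjecture is still open, not a proof of it.

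In short: there is no gap to name in a proof that was never claimed. Your write-up is a reasonable discussion of obstructions, but it should be framed as such rather than as a proof proposal.
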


In \cite{solymosi2006dense}, Solymosi proved this conjecture in the
special case that $P_0$ is a fixed set of points in the plane, no three of which are on a line, and $\mathcal{L}_0$ consists of all of their connecting lines.  However, it is not known if such configurations satisfy the following stronger conjecture.

\begin{Conj}\label{eps}For any set of points $P_0$ and for any set of lines $\mathcal{L}_0$ in the plane, there is a constant $\varepsilon = \varepsilon(P_0,\mathcal{L}_0)$, such that the maximum number of incidences
between $n$ points and $n$ lines in the plane containing no
subconfiguration isomorphic to $(P_0,\mathcal{L}_0)$ is $O(n^{4/3 - \varepsilon}).$
\end{Conj}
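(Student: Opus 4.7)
Since the conjecture is open in general, the plan targets the special case that the present paper establishes: $(P_0, \mathcal{L}_0)$ a natural $t \times t$ grid. The strategy is a density-increment argument combined with a two-stage extraction --- first combinatorial, then geometric. I would assume for contradiction an arrangement $(P, \mathcal{L})$ with $|P| = |\mathcal{L}| = n$ satisfying $|I(P,\mathcal{L})| \geq c\, n^{4/3}$, and aim to locate a natural $t \times t$ grid inside.

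First, I would perform a dyadic pigeonhole on the lines by their incidence counts, isolating a subfamily $\mathcal{L}' \subset \mathcal{L}$ of $m$ lines each incident to $\Theta(k)$ points from a subset $P' \subset P$, with $m$ and $k$ balancing along the extremal Szemer\'edi--Trotter curve ($m k^3 \approx n^{2}$, up to polylogarithmic factors). The bipartite incidence graph on $(\mathcal{L}', P')$ is dense and, crucially, $K_{2,2}$-free, since two lines determine at most one common point. Applying the K\H{o}v\'ari--S\'os--Tur\'an theorem yields a complete bipartite subgraph $K_{T,T}$ for $T = T(t, c)$ much larger than $t$: that is, $T$ lines sharing a common set of $T$ points.

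Second, I would extract from this $K_{T,T}$ an abstract $t \times t$ grid $(\mathcal{L}_1, \mathcal{L}_2)$. Take $\mathcal{L}_1$ to be any $t$ of the $T$ lines; then, among all $n$ lines of $\mathcal{L}$, count how many cross each of the $t$ members of $\mathcal{L}_1$ at points of $P$. A second K\H{o}v\'ari--S\'os--Tur\'an application, or a Cauchy--Schwarz double-counting, yields $t$ such lines, which form $\mathcal{L}_2$. The resulting $t^2$ intersections all lie in $P$, so $(\mathcal{L}_1, \mathcal{L}_2)$ forms an abstract $t \times t$ grid.

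The hard part will be the final step: promoting a combinatorial $t \times t$ grid to a \emph{natural} one. The convex-hull condition is delicate, because two lines of $\mathcal{L}_1$ may meet inside $\conv(P)$ even when their incidences with $\mathcal{L}_2$ are aligned in slope order. To overcome this, I would oversize the extraction, taking $T' \gg t$, then order the lines of $\mathcal{L}_1$ by slope and use an Erd\H{o}s--Szekeres / cup--cap argument in the dual plane to select a subset of $t$ lines whose pairwise crossings lie on one side of a separating line through the centroid of the associated point cluster. Repeating on $\mathcal{L}_2$ delivers a natural $t \times t$ grid. These iterated Ramsey extractions cost tower-type factors relating $T'$ to $t$, which is precisely why $\varepsilon(t)$ is necessarily very small, yet strictly positive.
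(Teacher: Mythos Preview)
Your first extraction step contains a fatal error. You correctly note that the bipartite incidence graph on $(\mathcal{L}', P')$ is $K_{2,2}$-free, but then claim that K\"{o}vari--S\'{o}s--Tur\'{a}n ``yields a complete bipartite subgraph $K_{T,T}$.'' This is backwards: K\"{o}vari--S\'{o}s--Tur\'{a}n upper-bounds the edge count of $K_{s,t}$-free graphs; it does not produce complete bipartite subgraphs, and a $K_{2,2}$-free graph contains no $K_{T,T}$ with $T \geq 2$ whatsoever. The object ``$T$ lines sharing a common set of $T$ points'' simply does not exist for $T\ge 2$, since it would force $T$ distinct lines through each of $T$ distinct points.

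The paper's route to the combinatorial $t\times t$ grid is where the real content lies, and your plan is missing it entirely. One applies Matou\v{s}ek's simplicial partition to $\mathcal{L}$, pigeonholes to a single cell $\mathcal{L}_i$ carrying many incidences, and then builds an auxiliary graph $H$ on the lines of $\mathcal{L}_i$ whose edge set is a union of \emph{matchings}, one matching per point of $P$ incident to at least two lines of $\mathcal{L}_i$. K\"{o}vari--S\'{o}s--Tur\'{a}n is applied to $H$, not to the incidence graph; a $K_{t,t}$ in $H$ then gives two $t$-sets of lines with all $t^2$ crossings distinct and lying in $P$ --- the matching structure is precisely what guarantees distinctness, a subtlety your Step~2 glosses over. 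For the upgrade to a \emph{natural} grid, your Erd\H{o}s--Szekeres/cup--cap idea is in the right spirit, but the paper achieves it with only polynomial loss (a $ct^2 \times ct^2$ grid already suffices) via a sweeping/separation argument, the semi-algebraic same-type lemma of Fox et al., and two applications of Dilworth's theorem; hence $\varepsilon(t) = \Omega(1/t^2)$, not tower-type as you anticipated.
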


Our first theorem is the following.

\begin{Th}\label{main theorem}
For fixed $t > 1$, let $\mathcal{L}_a$ and $\mathcal{L}_b$ be two sets of $t$ lines in the plane, and let $P_0=\{\ell_a \cap \ell_b: \ell_a \in \mathcal{L}_a, \ell_b \in \mathcal{L}_b\}$ such that $|P_0| = t^2$.  Then there is a constant $c = c(t)$ such that any arrangement of $m$ points and $n$ lines in the plane that does not contain a subconfiguration isomorphic to $(P_0,\mathcal{L}_a \cup \mathcal{L}_b)$ determines at most $c(m^{\frac{2t - 2}{3t - 2}}n^{\frac{2t - 1}{3t - 2}} + m^{1 + \frac{1}{6t-3}} + n)$ incidences.
\end{Th}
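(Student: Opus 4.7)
The plan is to combine the Szemer\'edi--Trotter theorem with a K\H{o}v\'ari--S\'os--Tur\'an-type (KST) estimate on an auxiliary bipartite graph built from the line set, balancing the two contributions via a dyadic decomposition on point degrees. First I would introduce the auxiliary bipartite graph $H$ whose two vertex parts are each a copy of $\mathcal{L}$: place an edge between $\ell$ on the left and $\ell'$ on the right whenever $\ell \cap \ell' \in P$, and label this edge by the intersection point. The key observation is that a $K_{t,t}$ in $H$ whose $t^2$ edge-labels are pairwise distinct---call it a \emph{natural} $K_{t,t}$---is exactly a subconfiguration of $(P,\mathcal{L})$ isomorphic to $(P_0,\mathcal{L}_1\cup\mathcal{L}_2)$. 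Hence the hypothesis forbids natural $K_{t,t}$'s in $H$. Since each $p\in P$ contributes $d(p)(d(p)-1)$ edges of $H$ all labeled by $p$, a double count and Cauchy--Schwarz yield $e(H)\geq I^2/m - I$.

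Next I would dyadically decompose the point set by line-degree and focus on one level $P_j = \{p\in P : 2^j \leq d(p) < 2^{j+1}\}$ on which a $1/\log m$-fraction of the incidences concentrate; write $D := 2^j$. The Szemer\'edi--Trotter bound gives $|P_j| = O(n^2/D^3 + n/D)$, so $I_j = \Theta(D|P_j|)$. Restricting $H$ to edges labeled by points of $P_j$ produces a graph $H_j$ satisfying $e(H_j) \gtrsim I_j^2/|P_j|$. The central combinatorial step is an upper bound of the form
\[
e(H_j) \;\leq\; C_t \bigl(n^{2-1/t} + D^{\alpha(t)} n\bigr),
\]
proved by exploiting the absence of natural $K_{t,t}$'s in $H_j$: standard KST counts all $K_{t,t}$'s in $H_j$, and one then subtracts the \emph{degenerate} $K_{t,t}$'s (those with a repeated label). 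Each degenerate $K_{t,t}$ forces at least three of its $2t$ lines to concur at a single point of $P_j$, and since no point of $P_j$ lies on more than $2D$ lines, the count of such degenerate configurations is polynomial in $D$ and linear in $|P_j|$.

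Finally I would combine $e(H_j) \geq I_j^2/|P_j|$ with the KST-type upper bound and with the Szemer\'edi--Trotter bound on $|P_j|$, and optimize the free parameter $D$. Balancing these three constraints yields the main exponents $(2t-2)/(3t-2)$ and $(2t-1)/(3t-2)$. The additive $m^{1+1/(6t-3)}$ term will then absorb the contribution of very-high-degree points (where Szemer\'edi--Trotter alone is tight), while the additive $n$ term accounts for lines with a single incidence. The main obstacle is the controlled KST estimate in the middle step: one must carefully count how many $K_{t,t}$'s in $H_j$ may be degenerate while tracking the concurrency forced by the dyadic level, and extract the correct exponent $\alpha(t)$. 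Without this refinement one only recovers the weaker naive bound $I \lesssim m^{1/2} n^{1-1/(2t)}$, which is strictly worse than the theorem for $t \geq 3$; getting past it is exactly what produces the claimed $\varepsilon(t) = 1/(9t-6)$ improvement over Szemer\'edi--Trotter.
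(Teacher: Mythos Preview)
Your approach diverges substantially from the paper's, and the step you yourself flag as ``the main obstacle'' is a genuine gap rather than a routine computation. The bound $e(H_j) \le C_t\bigl(n^{2-1/t} + D^{\alpha(t)}n\bigr)$ is asserted but not proved, and it is not clear that the degenerate-$K_{t,t}$ subtraction can be carried out with an exponent $\alpha(t)$ small enough to recover the target. The difficulty is that a point $p$ of degree $\approx D$ contributes not merely $\binom{D}{3}$ concurrent triples but a full $K_{D,D}$ to your bipartite graph $H_j$, hence $\binom{D}{t}^{2}$ degenerate $K_{t,t}$'s sitting on it; when you run the K\H{o}v\'ari--S\'os--Tur\'an double count these swamp the ``natural'' copies unless you control them very tightly, and you have not shown how. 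Your own remark that the unrefined version only yields $I \lesssim m^{1/2}n^{1-1/(2t)}$ confirms that the missing step is exactly the one carrying the theorem.

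The paper sidesteps this degeneracy accounting entirely with two different ideas. First, instead of Szemer\'edi--Trotter plus a dyadic decomposition of $P$ by degree, it applies Matou\v{s}ek's simplicial partition in dual form to the \emph{line} set, splitting $\mathcal{L}$ into $r$ parts of size $\Theta(n/r)$ so that every point crosses only $O(\sqrt{r})$ parts. After deleting the at most $O(m\sqrt{r})$ incidences in which a point meets a part in a single line, a pigeonhole step isolates one part $\mathcal{L}_i$ carrying a $1/r$ share of the surviving incidences, and every remaining point now meets $\mathcal{L}_i$ in at least two lines. Second --- and this is what replaces your degeneracy subtraction --- the auxiliary graph on vertex set $\mathcal{L}_i$ is built by placing at each such point only a \emph{matching} on the lines through it (pairing them consecutively in cyclic order), not the full clique. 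This costs at most a factor of $3$ in the edge count, but now any $K_{t,t}$ found by K\H{o}v\'ari--S\'os--Tur\'an is automatically nondegenerate: two edges with the same label would be two matching edges at the same point sharing an endpoint, which is impossible. The parameter $r$ is chosen to balance $m\sqrt{r}$ against the assumed incidence count and to force $|E(H)| \gtrsim |\mathcal{L}_i|^{2-1/t}$; the exponents $\tfrac{2t-2}{3t-2}$ and $\tfrac{2t-1}{3t-2}$ fall out of that optimization directly, with no further case analysis.
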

See the Figure \ref{unnatural grids}. As an immediate corollary, we prove Conjecture \ref{eps} in the following special case.

\begin{figure}
\centering
\includegraphics[width=0.5\textwidth]{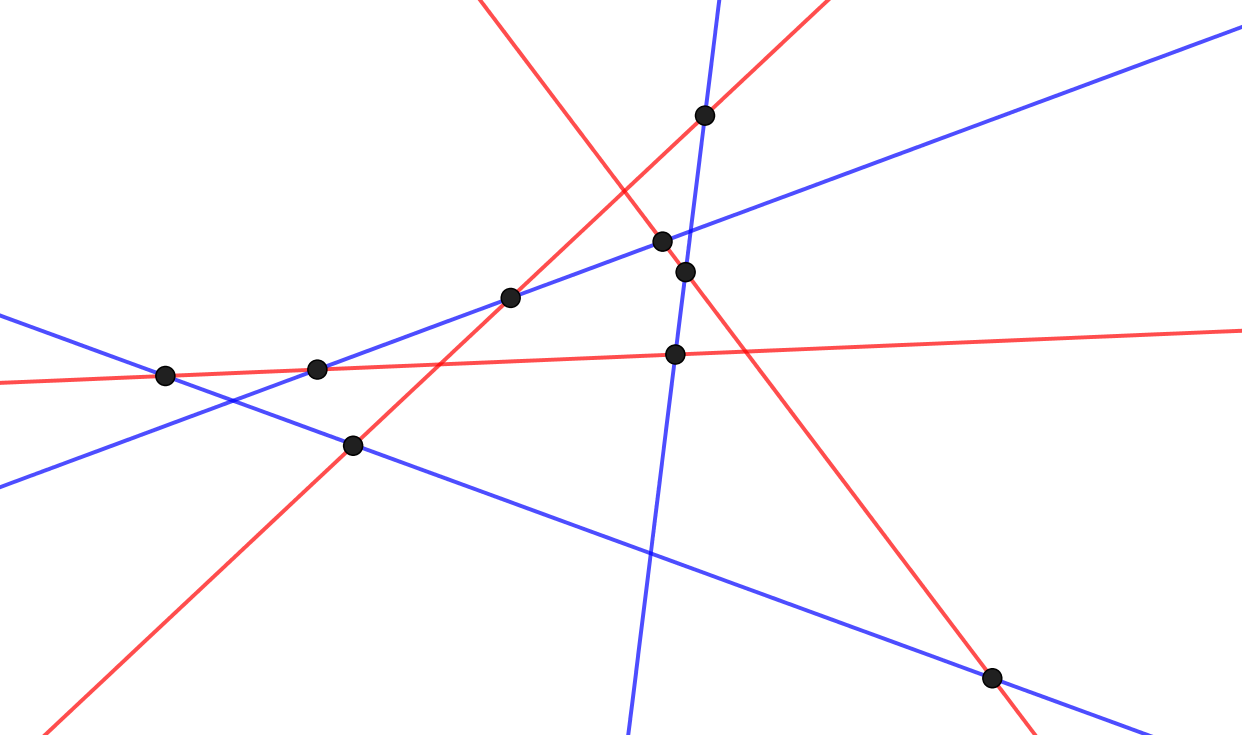}
\caption{An example with $|\mathcal{L}_a|=|\mathcal{L}_b|=3$ and $|P|=9$ in Theorem \ref{main theorem}.} \label{unnatural grids}
\end{figure}

\begin{Cor} \label{first corollary}
For fixed $t > 1$, let $\mathcal{L}_a$ and $\mathcal{L}_b$ be two sets of $t$ lines in the plane, and let $P_0=\{\ell_a \cap \ell_b: \ell_a \in \mathcal{L}_a, \ell_b \in \mathcal{L}_b\}.$  If $|P_0| = t^2$,  then any arrangement of $n$ points and $n$ lines in the plane that does not contain a subconfiguration isomorphic to $(P_0,\mathcal{L}_a \cup \mathcal{L}_b)$ determines at most $O(n^{\frac{4}{3} - \frac{1}{9t - 6}})$ incidences.
\end{Cor}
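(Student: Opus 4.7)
The plan is to derive the corollary as a direct specialization of Theorem \ref{main theorem} to $m = n$, together with an elementary comparison of the three terms in the resulting bound. No new combinatorial or geometric ingredient is required; the content is essentially an arithmetic simplification of the exponent $\frac{2t-2}{3t-2} + \frac{2t-1}{3t-2}$.

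First I would set $m = n$ in the conclusion of Theorem \ref{main theorem}, which yields
\[
|I(P,\mathcal{L})| \;\le\; c\Bigl(n^{\frac{2t-2}{3t-2} + \frac{2t-1}{3t-2}} + n^{1+\frac{1}{6t-3}} + n\Bigr) \;=\; c\Bigl(n^{\frac{4t-3}{3t-2}} + n^{1+\frac{1}{6t-3}} + n\Bigr).
\]
Next I would rewrite the leading exponent in the desired form. A direct computation gives
\[
\frac{4}{3} - \frac{4t-3}{3t-2} \;=\; \frac{4(3t-2) - 3(4t-3)}{3(3t-2)} \;=\; \frac{1}{9t-6},
\]
so the first term is exactly $n^{4/3 - 1/(9t-6)}$, matching the target bound.

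It then remains to check that the other two terms are dominated by $n^{4/3 - 1/(9t-6)}$ for every integer $t \ge 2$. For the linear term this is immediate, since $\frac{4}{3} - \frac{1}{9t-6} > 1$ whenever $t > 1$. For the middle term one needs
\[
1 + \frac{1}{6t-3} \;\le\; \frac{4}{3} - \frac{1}{9t-6},
\]
which after clearing the $\tfrac{1}{3}$ reduces to $\frac{1}{2t-1} + \frac{1}{3t-2} \le 1$. This inequality is already loose at $t = 2$, where the left-hand side equals $\tfrac{1}{3} + \tfrac{1}{4} = \tfrac{7}{12}$, and the left-hand side is strictly decreasing in $t$, so it holds for all $t \ge 2$.

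The only step that requires any care at all is the arithmetic identity $\frac{4t-3}{3t-2} = \frac{4}{3} - \frac{1}{9t-6}$, and a quick check that the middle term is absorbed by the leading one; both are routine. Hence the corollary follows immediately from Theorem \ref{main theorem}.
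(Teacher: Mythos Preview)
Your proposal is correct and matches the paper's approach: the paper states Corollary~\ref{first corollary} as an immediate consequence of Theorem~\ref{main theorem} with $m=n$ and gives no separate proof, and your write-up simply spells out that specialization and the routine exponent arithmetic showing $\frac{4t-3}{3t-2}=\frac{4}{3}-\frac{1}{9t-6}$ dominates the other two terms.
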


In the other direction, we prove the following.
\begin{Th}\label{Lower Bound}
Let $\mathcal{L}_a$ and $\mathcal{L}_b$ be two sets of $2$ lines in the plane, and let $P_0=\{\ell_a \cap \ell_b: \ell_1 \in \mathcal{L}_a, \ell_b \in \mathcal{L}_b\}$ such that $|P_0|= 4.$ For $n > 1,$ there exists an arrangement of $n$ points and $n$ lines in the plane that does not contain a subconfiguration isomorphic to $(P_0,\mathcal{L}_a \cup \mathcal{L}_b),$ and determines at least $\Omega({n^{1+\frac{1}{14}}})$ incidences.
\end{Th}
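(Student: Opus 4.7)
The aim is to exhibit, for every $n$, an arrangement of $n$ points and $n$ lines with $\Omega(n^{1+1/14})$ incidences that contains no natural $2\times 2$ grid. My first step would be to translate the forbidden structure into incidence-theoretic language: a natural $2\times 2$ grid corresponds to a specific $C_8$ in the bipartite incidence graph, namely a cycle of the form $\ell_1\,p_{13}\,\ell_3\,p_{23}\,\ell_2\,p_{24}\,\ell_4\,p_{14}\,\ell_1$, subject to the geometric constraint that neither of the same-pair intersections $\ell_1\cap\ell_2$, $\ell_3\cap\ell_4$ lies in $\conv\{p_{13},p_{14},p_{23},p_{24}\}$. Since any point--line incidence graph is automatically $K_{2,2}$-free, these are the only $C_8$-configurations I need to destroy, and the convex-hull condition provides strictly more freedom than destroying every $C_8$ in the graph.

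The construction would begin with a dense baseline, for example a Szemer\'edi--Trotter extremal lattice or a family of points on the cubic curve $y=x^3$, where collinearity is controlled by the additive identity $a+b+c=0$. I would then prune the baseline by an additive-combinatorial restriction on the lines (forcing their coefficients to lie in a set with few 3-term arithmetic progressions, in the spirit of Behrend's construction) so that every surviving natural grid forces a forbidden additive coincidence. Any residual natural grids left after this pruning would be killed by a deterministic deletion step, which costs only one incidence per grid and is therefore cheap once the grid count has been driven well below the incidence count. Optimizing the parameters of the baseline, the pruning, and the deletion, and rescaling the resulting configuration to $n$ points and $n$ lines, should yield the claimed bound of $\Omega(n^{1+1/14})$.

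The main obstacle lies in exploiting the convex-hull condition. Unlike a purely combinatorial $C_8$ count, the requirement that $\ell_1\cap\ell_2$ and $\ell_3\cap\ell_4$ both lie outside $\conv\{p_{ij}\}$ is an intrinsically geometric constraint and is not automatically controlled by additive structure of the line parameters; in particular, configurations such as two pencils through a common pair of centers, two pairs of parallel lines, or four tangents to a fixed convex curve all create natural grids despite looking quite different combinatorially. Quantifying precisely how many $C_8$'s in the baseline are natural therefore requires a separate geometric argument combined with the combinatorial count, and the exponent $1/14$ should arise from the careful balance between the incidence count, the total $C_8$ count, and the natural-$C_8$ count in the chosen baseline.
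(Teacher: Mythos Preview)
Your proposal rests on a misreading of the statement. Theorem~\ref{Lower Bound} is \emph{not} about natural $2\times 2$ grids; it forbids \emph{every} subconfiguration isomorphic to $(P_0,\mathcal{L}_1\cup\mathcal{L}_2)$, i.e.\ every $C_8$ in the incidence graph. The convex-hull relaxation you plan to exploit (``strictly more freedom than destroying every $C_8$'') simply is not available here, so the entire third paragraph of your plan is irrelevant and the second paragraph, which ends with ``any residual natural grids \dots would be killed by a deterministic deletion step'', is aiming at the wrong target. You must build an arrangement whose incidence graph has girth at least $10$, and nothing in your outline addresses that.

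The paper does this by an explicit, purely arithmetic construction rather than by pruning or deletion. The point set is $P=\{(a,j):a\in A,\ 1\le j\le n^{13/14}\}$ and the lines are $y=mx+b$ with $m\in M$ and $1\le b\le n^{13/14}/2$, where $M\subset[n^{1/7}]$ is an ordinary Sidon set of size $\Theta(n^{1/14})$ and $A\subset[\tfrac14 n^{11/14}]$ is a $k$-fold Sidon set with $k=n^{1/7}$, of size $\Omega(n^{1/14})$ (obtained by a greedy/Ruzsa-type argument giving $|A|\gg k^{-4/3}N^{1/3}$). Each line hits $|A|$ points, so $|I(P,\mathcal{L})|=\Theta(n^{1+1/14})$. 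A putative $C_8$ with slopes $m_1,\dots,m_4$ and $x$-coordinates $x_1,\dots,x_4\in A$ forces
\[
(m_1-m_2)x_1+(m_2-m_3)x_2+(m_3-m_4)x_3+(m_4-m_1)x_4=0,
\]
with coefficients of absolute value at most $k$ summing to zero; the $k$-fold Sidon property of $A$ makes every such solution trivial, and the remaining degenerate cases are ruled out either because some line would be vertical/parallel or because the Sidon property of $M$ kills the relation $m_1+m_3=m_2+m_4$. The exponent $1/14$ is exactly the balance between $|M|\approx n^{1/14}$, $|A|\approx n^{1/14}$, and the column height $n^{13/14}$; it is not the outcome of a deletion trade-off.
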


Given two sets $\mathcal{L}_a$ and $\mathcal{L}_b$ of $t$ lines in the plane, and the point set $P_0=\{\ell_a \cap \ell_b: \ell_a \in \mathcal{L}_a, \ell_b \in \mathcal{L}_b\},$ we say that $(P_0,\mathcal{L}_a\cup \mathcal{L}_b)$ forms a \emph{natural $t\times t$ grid} if $|P_0| = t^2,$ and the convex hull of $P_0,$ $conv(P_0),$ does not contain the intersection point of any two lines in $\mathcal{L}_a$ and does not contain the intersection point of any two lines in $\mathcal{L}_b.$ See Figure \ref{natural grids}.

\begin{figure}
\centering
\includegraphics[width=0.45\textwidth]{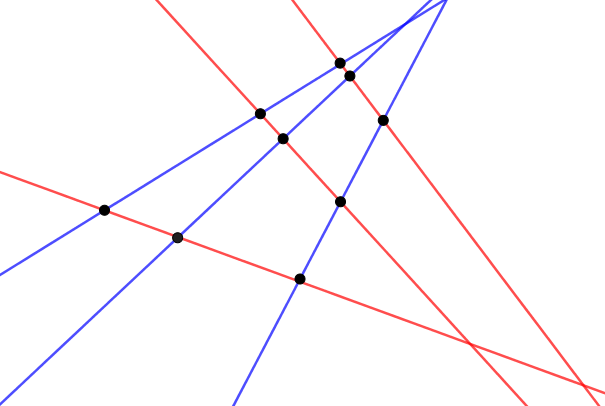}
\caption{An example of a natural $3 \times 3$ grid.}\label{natural grids}
\end{figure}

\begin{Th} \label{Second Corollary}
For fixed $t >1,$ there is a constant $\varepsilon  =\varepsilon(t),$ such that any arrangement of $n$ points and $n$ lines in the plane that does not contain a natural $t \times t$ grid determines at most 
$O(n^{\frac{4}{3} - \varepsilon})$ incidences.
\end{Th}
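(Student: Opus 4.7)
The plan is to deduce Theorem~\ref{Second Corollary} from Corollary~\ref{first corollary} via a Ramsey-type reduction. All $t \times t$ grids (natural or not) share the same incidence graph, so Corollary~\ref{first corollary} already bounds incidences whenever every $s \times s$ grid is forbidden (natural or not). The hypothesis of Theorem~\ref{Second Corollary} is strictly weaker, forbidding only natural $t \times t$ grids, so the key auxiliary statement I would establish is the following.

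\emph{Main Lemma.} For every integer $t \geq 2$ there exists $s = s(t)$ such that every configuration $(P_0, \mathcal{L}_1 \cup \mathcal{L}_2)$ with $|\mathcal{L}_1| = |\mathcal{L}_2| = s$ and $|P_0| = s^2$ contains a subconfiguration forming a natural $t \times t$ grid. Granted this lemma, an arrangement of $n$ points and $n$ lines avoiding every natural $t \times t$ grid must also avoid every $s(t) \times s(t)$ grid, for otherwise the lemma would produce a forbidden natural $t \times t$ sub-grid. Applying Corollary~\ref{first corollary} with parameter $s(t)$ then yields $O(n^{4/3 - 1/(9s(t) - 6)})$ incidences, proving the theorem with $\varepsilon(t) := 1/(9s(t) - 6)$.

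To prove the Main Lemma, I would use iterated Erd\H{o}s--Szekeres to extract a \emph{monotone} $t \times t$ sub-grid: labeled subsets $\mathcal{L}_1' = \{\ell_1, \dots, \ell_t\}$ and $\mathcal{L}_2' = \{m_1, \dots, m_t\}$ such that for every $j$ the points $\ell_1 \cap m_j, \dots, \ell_t \cap m_j$ appear on $m_j$ in this order, and for every $i$ the points $\ell_i \cap m_1, \dots, \ell_i \cap m_t$ appear on $\ell_i$ in this order. Starting from the full $s \times s$ grid, the plan is to pick an arbitrary line in $\mathcal{L}_2$ to linearly order $\mathcal{L}_1$, then iteratively thin $\mathcal{L}_1$ by extracting Erd\H{o}s--Szekeres monotone subsequences compatible with the orderings induced by further lines of $\mathcal{L}_2$; a symmetric thinning of $\mathcal{L}_2$ afterwards fixes the ordering on each $\ell_i$. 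Since each Erd\H{o}s--Szekeres step costs only a square-root factor and a bounded factor for orientation choices, a tower-type choice of $s(t)$ guarantees a monotone sub-grid of size at least $t$.

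The final step, which I expect to be the main obstacle, is the geometric verification that every monotone $t \times t$ sub-grid is natural. My plan is to show that the monotone orderings force $p_{1,1}, p_{1,t}, p_{t,t}, p_{t,1}$ (with $p_{i,j} := \ell_i \cap m_j$) to form a convex quadrilateral and all other grid points to lie on its boundary or in its interior, so that this quadrilateral equals $\conv(P_0')$. Then for any $1 \leq i < i' \leq t$, the chords $p_{i,1}p_{i,t}$ on $\ell_i$ and $p_{i',1}p_{i',t}$ on $\ell_{i'}$ have their endpoints in matching left-to-right order on the opposite sides $m_1, m_t$ of the quadrilateral, so they cannot meet in its interior; hence $\ell_i \cap \ell_{i'}$ lies outside $\conv(P_0')$. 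The symmetric analysis for pairs in $\mathcal{L}_2'$ completes the verification, establishing the Main Lemma and thus Theorem~\ref{Second Corollary}.
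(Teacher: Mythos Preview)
Your high-level strategy is exactly the paper's: reduce Theorem~\ref{Second Corollary} to Corollary~\ref{first corollary} via a Ramsey-type ``Main Lemma'' asserting that any sufficiently large $s\times s$ grid contains a natural $t\times t$ subgrid. The paper states this as Lemma~\ref{main lemma 2} with the quantitative bound $s(t)=O(t^2)$, which in turn gives $\varepsilon(t)=\Omega(1/t^2)$; your iterated Erd\H{o}s--Szekeres scheme would at best yield a tower-type $s(t)$, but that discrepancy is not fatal to the theorem.

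What is a genuine gap is the geometric verification. Your claim that a monotone $t\times t$ subgrid is automatically natural, and in particular that $p_{1,1},p_{1,t},p_{t,t},p_{t,1}$ form a convex quadrilateral containing all grid points, is false. Already for $t=2$ your monotonicity condition is vacuous (any labelling works), yet non-natural $2\times 2$ grids exist: take $m_1:y=0$, $m_2:x=0$, $\ell_1:2x+y=2$, $\ell_2:y=x+1$; the four intersection points are $(1,0),(0,2),(-1,0),(0,1)$, whose convex hull is the triangle on the first three, and both $\ell_1\cap\ell_2=(1/3,4/3)$ and $m_1\cap m_2=(0,0)$ lie on its boundary. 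The quadrilateral $p_{1,1}p_{1,2}p_{2,2}p_{2,1}$ here is non-convex, so your chord argument never gets started. More generally, monotonicity constrains orderings along individual lines but does not by itself force the four corner points into convex position, which your argument presupposes.

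The paper circumvents this by working with auxiliary separating lines rather than relying on the corner points. It first separates the two colour classes along the $y$-axis, then applies a semi-algebraic Ramsey theorem of Fox et al.\ to force all blue--blue intersections from one subfamily to lie on a single side of every red line; a separating line $\ell_1$ then isolates the relevant crossings, and two applications of Dilworth (your Erd\H{o}s--Szekeres) with respect to $\ell_1$ and a second auxiliary line $\ell_2$ finish the job. The key point is that the separating lines are chosen \emph{after} the subfamilies are extracted, guaranteeing convex separation directly rather than trying to deduce it from ordering data alone. Your approach could likely be repaired by extracting a monotone grid substantially larger than $t\times t$ and then arguing further, but as written the ``monotone $\Rightarrow$ natural'' step does not hold.
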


 Let us remark that $\varepsilon = \Omega(1/t^2)$ in Theorem \ref{Second Corollary}, and can be easily generalized to the off-balanced setting of $m$ points and $n$ lines.

We systemically omit floor and ceiling signs whenever they are not crucial for the sake of clarity of our presentation. All logarithms are assumed to be base $2.$ For $N>0,$ we let $[N]=\{1, \ldots, N\}.$

\section{Proof of Theorem \ref{main theorem}}
In this section we will prove Theorem \ref{main theorem}. We first list several results that we will use. The first lemma is a classic result in graph theory.

\begin{Lem} [K\"{o}vari-S\'{o}s-Tur\'{a}n \cite{kovari1954problem}]\label{Kovari Sos Turan theorem}
Let $G = (V, E)$ be a graph that does not contain a complete bipartite graph $K_{r,s}$  $(1 \leq r \leq s)$ as a subgraph. Then $|E| \leq c_{s}|V|^{2-\frac{1}{r}},$ where $c_{s} > 0$ is constant which only depends on $s.$
\end{Lem}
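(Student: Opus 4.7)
The plan is to prove the bound by double counting $r$-stars, i.e., pairs $(v,S)$ where $v \in V$ and $S$ is an $r$-element subset of the neighborhood $N(v)$. This is the standard approach, and the crux is combining a $K_{r,s}$-free upper bound on the count with a convexity-based lower bound.

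First I would set $n = |V|$ and, for each $v \in V$, let $d(v)$ denote its degree. The number of pairs $(v,S)$ with $S \subseteq N(v)$ and $|S| = r$ equals $\sum_{v \in V} \binom{d(v)}{r}$. On the other hand, for a fixed $r$-subset $S \subseteq V$, let $f(S)$ be the number of vertices $v$ with $S \subseteq N(v)$. If $f(S) \geq s$ for some $S$, then choosing any $s$ such vertices together with $S$ yields a copy of $K_{r,s}$ in $G$, contradicting the hypothesis. Hence $f(S) \leq s-1$ for every $r$-subset $S$, and summing gives the upper bound
\begin{equation*}
\sum_{v \in V} \binom{d(v)}{r} \;=\; \sum_{S \in \binom{V}{r}} f(S) \;\leq\; (s-1)\binom{n}{r}.
\end{equation*}

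Next I would invoke convexity of the function $x \mapsto \binom{x}{r} = x(x-1)\cdots(x-r+1)/r!$ on $[r-1,\infty)$, combined with the handshake identity $\sum_v d(v) = 2|E|$, to obtain the lower bound
\begin{equation*}
n\binom{2|E|/n}{r} \;\leq\; \sum_{v \in V} \binom{d(v)}{r}.
\end{equation*}
Here one needs a small remark: if the average degree $2|E|/n$ is less than $r-1$, then trivially $|E| \leq (r-1)n/2 \leq r \cdot n$, which is absorbed into the desired bound; otherwise Jensen's inequality applies directly on the range where $\binom{x}{r}$ is convex and nonnegative.

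Combining the two inequalities yields $\binom{2|E|/n}{r} \leq (s-1)\binom{n}{r}/n \leq (s-1)n^{r-1}/r!$, and estimating $\binom{2|E|/n}{r} \geq (2|E|/n - r + 1)^r/r!$ gives, after taking $r$-th roots,
\begin{equation*}
\frac{2|E|}{n} \;\leq\; \bigl((s-1)\bigr)^{1/r} n^{1-1/r} + (r-1).
\end{equation*}
Rearranging produces $|E| \leq c_s \, n^{2 - 1/r}$ for a constant $c_s$ depending only on $s$ (and on $r \leq s$), which is the claimed inequality. The only step requiring minor care is the convexity/Jensen application, since $\binom{x}{r}$ is only convex for $x \geq r-1$; handling the low-average-degree case separately as above dispenses with this issue cleanly.
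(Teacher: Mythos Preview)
Your argument is the standard double-counting proof of the K\H{o}v\'ari--S\'os--Tur\'an theorem and is correct as written, including the care you take with the convexity of $x\mapsto\binom{x}{r}$ and the separate treatment of the low-average-degree regime. The paper itself does not supply a proof of this lemma; it is quoted as a classical result with a reference to \cite{kovari1954problem}, so there is nothing to compare against beyond noting that your proof is precisely the one found in that source.
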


The next lemma we will use is a partitioning tool in discrete geometry known as \emph{simplicial partitions}. We will use the dual version which requires the following definition. Let $\mathcal{L}$ be a set of lines in the plane. We say that a point $p$ \emph{crosses} $\mathcal{L}$ if it is incident to at least one member of $\mathcal{L},$ but not incident to all members in $\mathcal{L}.$

\begin{Lem}[Matousek \cite{matouvsek1992efficient}]\label{Simplicial Partition Theorem}
Let $\mathcal{L}$ be a set of $n$ lines in the plane and let $r$ be a parameter such that $1 < r < n.$ Then there is a partition on $\mathcal{L}= \mathcal{L}_1 \cup \cdots \cup \mathcal{L}_r$ into $r$ parts, where $\frac{n}{2r} \leq |\mathcal{L}_i| \leq \frac{2n}{r},$ such that any point $p \in \mathbb{R}^2$  crosses at most $O(\sqrt{r})$ parts $\mathcal{L}_i.$
\end{Lem}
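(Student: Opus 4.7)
My plan is to prove the primal form of the simplicial partition lemma and then transfer it by projective point-line duality. Under the standard duality $(a,b)\mapsto\{y=ax-b\}$ incidence is preserved, and a point $p$ that crosses $\mathcal{L}_i$ (i.e.\ is incident to some but not all lines of $\mathcal{L}_i$) dualizes to a line $p^*$ that meets at least one dual point in $\mathcal{L}_i^*$. Consequently, if we can partition $n$ points into $r$ parts of sizes in $[n/(2r),n/r]$, enclose each part in a triangle $\Delta_i$, and guarantee that every line of $\RR^2$ meets at most $O(\sqrt r)$ of the triangles $\Delta_i$, then pulling back by duality immediately gives the claimed line partition: a triangle $\Delta_i$ missed by $p^*$ certifies that $p$ is incident to \emph{no} line of $\mathcal{L}_i$, so the number of parts $p$ crosses is bounded by the number of triangles its dual line meets.

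The primal partition I would build from a one-step \emph{basic lemma}: given $N$ points and a finite test set $T$ of $t$ lines, there exists a triangle $\Delta$ containing a constant fraction of the points and crossed by at most $O(\sqrt t)$ test lines. I would derive this from the Chazelle--Friedman $(1/\sqrt t)$-cutting of $T$, which decomposes the plane into $O(t)$ triangular cells each crossed by $O(\sqrt t)$ lines of $T$; pigeonhole produces a cell containing $\Omega(N/t)$ of the points, and a ham-sandwich merging step iteratively fuses pairs of cells of roughly equal point-count into a larger triangle at the cost of only $O(1)$ additional crossings per merge, yielding the desired constant point-fraction with $O(\sqrt t)$ test crossings after rebalancing.

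With the basic lemma in hand, I would construct the full partition in a binary-tree fashion. Starting from the $n$ points and a test set $T$ of size $\Theta(r)$ chosen to be a $(1/r)$-approximation for the range space of halfplanes, I apply the basic lemma to split the points into an ``inside'' piece (within some $\Delta$) and an ``outside'' piece, and recurse until every leaf contains between $n/(2r)$ and $n/r$ points. The approximation property of $T$ ensures that the number of test lines meeting any triangle agrees with the number of arbitrary lines meeting it up to a controlled additive error, so a fixed line $\ell\in\RR^2$ behaves essentially like a test line at every level of the recursion. Because $\ell$ can only descend into a subproblem whose parent triangle it actually crosses, summing over the $O(\log r)$ levels and using the per-level $O(\sqrt r)$ bound telescopes to give a total crossing count of $O(\sqrt r)$ leaf triangles.

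The main obstacle, as I expect, is securing a \emph{uniform} $O(\sqrt r)$ crossing bound over all lines of $\RR^2$ rather than merely the test lines: a naive union across $\log r$ recursion levels loses a polylogarithmic factor, and a naive $r$-step peeling would even give $O(r^{3/2})$. Resolving this requires using hierarchical cuttings in the style of Chazelle--Matou\v{s}ek together with the Clarkson--Shor random-sampling estimate that controls the expected number of cells of a random cutting that a fixed line crosses; these two ingredients let the per-level estimates combine geometrically instead of linearly. Once the primal partition is secured, enclosing each $\mathcal{L}_i^*$ in a triangle $\Delta_i$ and dualizing produces the partition $\mathcal{L}=\mathcal{L}_1\cup\cdots\cup\mathcal{L}_r$ with the claimed size balance and the stated $O(\sqrt r)$-bound on the number of parts any point can cross.
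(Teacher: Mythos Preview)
The paper does not prove this lemma at all: it is quoted as a black box from Matou\v{s}ek's paper and used as-is in the proof of Theorem~1.3. So there is no in-paper proof to compare your proposal against.

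That said, your reduction by point--line duality is the right way to obtain the stated line version from Matou\v{s}ek's primal simplicial-partition theorem for points, and your observation that ``$p$ crosses $\mathcal{L}_i$'' dualizes to ``$p^*$ meets the triangle enclosing $\mathcal{L}_i^*$'' is exactly the translation one needs. Where your sketch drifts from Matou\v{s}ek's actual argument is in the construction of the primal partition. Matou\v{s}ek does not use a ham-sandwich merging of cutting cells or a binary-tree recursion; he peels off one simplex at a time using a \emph{multiplicative-weight reweighting} of the test lines: at each step one takes a $(1/r)$-cutting of the current weighted test set, picks a cell containing $\Theta(n/r)$ of the remaining points, doubles the weight of every test line crossing that cell, and repeats. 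The reweighting is precisely what forces the crossing counts to stay at $O(\sqrt r)$ over all $r$ rounds (the geometric growth of weights caps how many times any single line can be ``charged''), and an $\varepsilon$-approximation from the bounded VC-dimension of halfplanes then transfers the bound from the finite test set to every line in $\RR^2$.

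Your ``basic lemma'' as stated --- a triangle holding a \emph{constant fraction} of the points yet crossed by only $O(\sqrt t)$ of $t$ test lines --- is stronger than what Matou\v{s}ek proves or needs, and the ham-sandwich merging you propose to obtain it is not obviously sound: merging $\Theta(t)$ cutting cells into a single triangle cannot in general keep the crossing count at $O(\sqrt t)$. If you replace that step by the reweighting/peeling argument above, the rest of your outline (cuttings, VC-approximation, duality) carries through and matches the original proof.
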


\begin{proof}[Proof of Theorem \ref{main theorem}.] Set $t \geq 2.$ Let $P$ be a set of $m$ points in the plane and let $\mathcal{L}$ be a set of $n$ lines in the plane such that $(P, \mathcal{L})$ does not contain a subconfiguration  isomorphic to $(P_0, \mathcal{L}_a\cup \mathcal{L}_b).$

 If $n \geq m^2/100,$ then (\ref{szt}) implies that $|I(P,\mathcal{L})|= O(n)$ and we are done.  Likewise, if $n \leq m^{\frac{t}{2t-1}},$ then (\ref{szt}) implies that $|I(P,\mathcal{L})|= O(m^{1+\frac{1}{6t-3}})$ and we are done. Therefore, let us assume $m^{\frac{t}{2t-1}} < n < m^2/100.$ In what follows, we will show that $|I(P,\mathcal{L})| =O(m^{\frac{2t-2}{3t-2}}n^{\frac{2t-1}{3t-2}}).$
For sake of contradiction, suppose that $I(P, \mathcal{L}) \geq cm^{\frac{2t-2}{3t-2}}n^{\frac{2t-1}{3t-2}},$ where $c$ is a large constant depending on $t$ that will be determined later.

Set $ r= \lceil10 {n^{\frac{4t-2}{3t-2}}}/{m^{\frac{2t}{3t-2}}}\rceil.$
Let us remark that $1 < r < n/10$ since we are assuming $m^{\frac{t}{2t-1}} < n < m^{2}/100.$ We apply Lemma \ref{Simplicial Partition Theorem} with parameter $r$ to $\mathcal{L},$ and obtain the partition $\mathcal{L}= \mathcal{L}_1 \cup \cdots \cup \mathcal{L}_r$ with the properties described above. Note that $|\mathcal{L}_i| > 1.$ Let $G$ be the incidence graph of $(P, \mathcal{L}).$ For $p \in P,$ consider the set of lines in $\mathcal{L}_i.$ If $p$ is incident to exactly one line in $\mathcal{L}_i,$ then delete the corresponding edge in the incidence graph $G.$ After performing this operation between each point $p \in P$ and each part $\mathcal{L}_i,$ by Lemma \ref{Simplicial Partition Theorem}, we have deleted at most $c_1m\sqrt{r}$ 
edges in $G,$ where $c_1$ is an absolute constant. By setting $c$ sufficiently large, we have 

\begin{align*}
c_1m\sqrt{r}=\sqrt{10}c_1 m^{\frac{2t-2}{3t-2}}n^{\frac{2t-1}{3t-2}} < {(c/2)m^{\frac{2t-2}{3t-2}}}{n^{\frac{2t-1}{3t-2}}}.
\end{align*}

\noindent Therefore, there are at least ${(c/2)m^{\frac{2t-2}{3t-2}}}{n^{\frac{2t-1}{3t-2}}}$ edges remaining in $G.$ By the pigeonhole principle, there is a part $\mathcal{L}_i$ such that the number of edges between $P$ and $\mathcal{L}_i$ in $G$ is at least 

\begin{align*}
 \frac{{cm^{\frac{2t-2}{3t-2}}}{n^{\frac{2t-1}{3t-2}}}}{2r}= \frac{cm^{\frac{4t-2}{3t-2}}}{20n^{\frac{2t-1}{3t-2}}} .
\end{align*}

\noindent Hence, every point $p \in P$ has either $0$ or at least $2$ neighbors in $\mathcal{L}_i$ in $G.$ We claim that $(P, \mathcal{L}_i)$ contains a subconfiguration isomorphic to $(P_0, \mathcal{L}_a \cup \mathcal{L}_b).$ To see this, let us construct a graph $H=(\mathcal{L}_i, E)$ as follows. Set $V(H)=\mathcal{L}_i.$ Let $Q=\{q_1, \ldots, q_w\} \subset P$ be the set of points in $P$ that have at least two neighbors in $\mathcal{L}_i$ in the graph $G.$ For $q_j \in Q,$ consider the set of lines $\{\ell_1, \ldots, \ell_s\}$ from $\mathcal{L}_i$ incident to $q_j,$ such that $\{\ell_1, \ldots, \ell_s\}$ appears in clockwise order. Then we define $E_j \subset \binom{\mathcal{L}_i}{2}$ to be a matching on $\{\ell_1, \ldots, \ell_s\},$ where \\

\[
                    E_j= \left\{
                       \begin{array}{ll}
                         \{(\ell_1, \ell_2),(\ell_3, \ell_4), \ldots, (\ell_{s-1}, \ell_s)\}&\text{if } s \text{ is even}.\\ \\
                         
                         \{(\ell_1, \ell_2),(\ell_3, \ell_4), \ldots, (\ell_{s-2}, \ell_{s-1})\}&\text{if }  s \text{ is odd}.
                        \end{array}
                    \right.
                \]  \\

\noindent Set $E(H)=E_1 \cup E_2 \cup \cdots \cup E_w.$ Note that $E_j$ and $E_k$ are disjoint, since no two points are contained in two lines. Since $|E_j| \geq 1,$ we have 

\begin{align*}
|E(H)| \geq \frac{cm^{\frac{4t-2}{3t-2}}}{60n^{\frac{2t-1}{3t-2}}}.
\end{align*}

\noindent Since
\begin{align*}
|V(H)|=|\mathcal{L}_i|\leq \frac{m^{\frac{2t}{3t-2}}}{5n^{\frac{t}{3t-2}}},
\end{align*}

\noindent this implies

\begin{align*}
|E(H)| \geq \frac{c}{60 \cdot 25}(V(H))^{2-\frac{1}{t}}.
\end{align*}

\noindent By setting $c=c(t)$ to be sufficiently large, Lemma \ref{Kovari Sos Turan theorem} implies that $H$ contains a copy of $K_{t,t}.$ Let $\mathcal{L}'_1, \mathcal{L}'_2 \subset \mathcal{L}_i$ correspond  to the vertices of this $K_{t,t}$ in $H,$ and let $P'=\{\ell_1 \cap \ell_2 \in P: \ell_1 \in \mathcal{L}'_1, \ell_2 \in \mathcal{L}'_2 \}.$ We claim that $(P', \mathcal{L}'_1 \cup \mathcal{L}'_2)$ is isomorphic to $(P_0, \mathcal{L}_a \cup \mathcal{L}_b).$ It suffices to show that $|P'|=t^2.$ For the sake of contradiction, suppose $p \in \ell_1 \cap \ell_2 \cap \ell_3,$ where $\ell_1, \ell_2 \in \mathcal{L}'_1$ and $\ell_3 \in \mathcal{L}'_2.$ This would imply $(\ell_1, \ell_3), (\ell_2, \ell_3) \in E_j$ for some $j$ which contradicts the fact that $E_j \subset \binom{\mathcal{L}_i}{2}$ is a matching. Same argument follows if $\ell_1 \in \mathcal{L}'_1$ and $\ell_2, \ell_3 \in \mathcal{L}'_2.$ This completes the proof of Theorem \ref{main theorem}.  \end{proof}

\section{Natural Grids}

\begin{figure}
\centering
\includegraphics[width=0.5\textwidth]{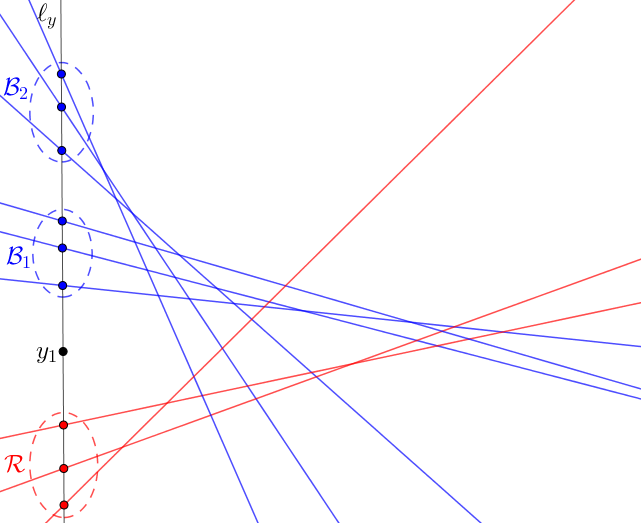}
\caption{ Sets $\mathcal{R}, \mathcal{B}_1, \mathcal{B}_2$ in the proof of Lemma \ref{main lemma 2}.}
\label{First Step}
\end{figure}
\noindent Given a set of $n$ points $P$ and a set of $n$ lines $\mathcal{L}$ in the plane, if $|I(P, \mathcal{L})| \geq cn^{\frac{4}{3}-\frac{1}{9k-6}}$, where $c$ is a sufficiently large constant depending on $k,$ then Corollary \ref{first corollary} implies that there are two sets of $k$ lines such that each pair of them from different sets intersects at a unique point in $P.$ Therefore, Theorem \ref{Second Corollary} follows  by combining  Theorem \ref{main theorem} with the following lemma.


\begin{Lem} \label{main lemma 2} \label{natural grid}
There is a natural number $c$ such that the following holds. Let $\mathcal{B}$ be a set of $ct^2$ blue lines in the plane, and let $ \mathcal{R}$ be a set of $ct^2$ red lines in the plane such that for $P=\{\ell_1 \cap \ell_2: \ell_1 \in \mathcal{B}, \ell_2 \in \mathcal{R}\}$ we have $|P|=c^2t^4.$ Then $(P, \mathcal{B} \cup \mathcal{R})$ contains a natural $t \times t$ grid. 
\end{Lem}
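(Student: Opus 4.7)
The plan is to extract a natural grid via a two-stage refinement: a slope pigeonhole followed by a monotone-selection via Erd\H{o}s--Szekeres. First, I would partition the slope circle into a constant number of arcs and apply a double-pigeonhole on ordered pairs of arcs to find disjoint arcs $A_R, A_B$ respectively containing $\Omega(t^2)$ red lines and $\Omega(t^2)$ blue lines. After a rotation of coordinates I may assume the selected red lines are nearly vertical and the selected blue lines nearly horizontal; the pairwise blue-red intersections then concentrate in a bounded ``grid-like'' region determined (to first order) by the $x$-intercepts of the reds and the $y$-intercepts of the blues.

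Next, among the $\Omega(t^2)$ nearly-horizontal blue lines, I would order the lines by slope and apply the Erd\H{o}s--Szekeres theorem to the resulting sequence of $y$-intercepts. Since $\sqrt{\Omega(t^2)} = \Omega(t)$, this yields $t$ blue lines whose slopes and $y$-intercepts are jointly monotone. A direct calculation of the formula $x = (c_j - c_i)/(s_i - s_j)$ shows that for such a monotone family all $\binom{t}{2}$ pairwise blue-blue intersections have $x$-coordinates of a single sign, and by choosing the direction of the monotonicity I can force them to lie entirely in one half-plane (``the far left''). I would perform the symmetric selection on the red lines so that their $\binom{t}{2}$ pairwise intersections are confined to another half-plane (``the far right'' or ``far above''). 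The decomposition $\mathcal{R}, \mathcal{B}_1, \mathcal{B}_2$ in the figure likely reflects the case distinction between the two possible monotonicity directions for the blue lines, with pigeonhole keeping $\Omega(t^2)$ lines of one ``type''.

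The final step is to verify the natural-grid condition: the convex hull of the $t^2$ blue-red intersections is contained in a bounded rectangle sandwiched between the two half-planes containing the same-color intersections, so conv$(P_0)$ excludes all blue-blue and red-red intersections, completing the extraction of a natural $t \times t$ grid.

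The hardest step will be producing genuine quantitative separation between the three intersection regions, since near-parallel lines with unbounded intercepts can in principle have same-color intersections arbitrarily close to the blue-red ``grid'' region; the sign control from Erd\H{o}s--Szekeres alone places a blue-blue intersection in $\{x < 0\}$ but not in $\{x < -\delta\}$ for any fixed $\delta$. I expect the remedy to be to restrict attention at the outset to a bounded sub-region of the plane containing $\Omega(c^2 t^4)$ of the blue-red intersection points, which effectively forces the relevant portions of the lines to pass through a bounded window; then the monotone sign control does translate into genuine separation from the convex hull. The quadratic $\Omega(t^2)$ threshold for $|\mathcal{B}|$ and $|\mathcal{R}|$ is exactly accounted for by the $\sqrt{\cdot}$-loss in one Erd\H{o}s--Szekeres application applied to each color.
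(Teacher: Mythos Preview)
Your proposal has a genuine gap at the coordination step between the two colour classes. Applying Erd\H{o}s--Szekeres independently to the blues and to the reds gives you only \emph{sign} control: all blue--blue crossings lie in some half-plane $H_B$ (say $\{x<0\}$) and all red--red crossings in some half-plane $H_R$. But nothing in your argument places the $t^2$ bichromatic points relative to $H_B$ or $H_R$. If, for instance, the selected red lines happen to have $x$-intercepts of both signs, the bichromatic grid straddles the $y$-axis and its convex hull can easily swallow a blue--blue crossing sitting in $\{x<0\}$. Your proposed remedy of first passing to a bounded window does not repair this: the Erd\H{o}s--Szekeres half-plane is anchored at whatever axis you sorted against, and there is no reason a single such axis separates all bichromatic points from all monochromatic crossings. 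The difficulty you flag (no quantitative $\{x<-\delta\}$) is real, but the actual obstruction is worse --- you do not even get the bichromatic hull into $\{x\ge 0\}$.

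The paper supplies precisely this missing coordination, and by a different mechanism than anything in your sketch. After separating red from blue by $y$-intercept and splitting the blues into a lower half $\mathcal{B}_1$ and an upper half $\mathcal{B}_2$ (so $\mathcal{B}_1,\mathcal{B}_2$ are \emph{not} a case split on monotonicity direction), it defines a $3$-partite semi-algebraic hypergraph on $\mathcal{R}\times\mathcal{B}_1\times\mathcal{B}_2$ recording whether $b_i\cap b_j$ lies above $r$, and applies the density theorem of Fox et al.\ to extract positive-fraction subsets $\mathcal{R}',\mathcal{B}_1',\mathcal{B}_2'$ that are homogeneous for this relation. This is what ties the location of blue--blue crossings to the red lines. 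Only afterwards does the paper run two Dilworth/Erd\H{o}s--Szekeres steps, each taken with respect to a specific separator line $\ell_1,\ell_2$ whose existence is forced by the preceding structure, so that the resulting half-planes provably miss all bichromatic points. Your outline has no analogue of the Fox et al.\ step (or of the Ham-Sandwich alternative the paper mentions in its concluding remarks), and without it the two monotone selections do not interact.
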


To prove Lemma \ref{main lemma 2}, we will need the following lemma which is an immediate consequence of Dilworth's Theorem.

\begin{Lem} \label{Dilworth}
For $n >0,$ let $\mathcal{L}$ be a set of $n^2$ lines in the plane, such that no two members intersect the same point on the $y$-axis. Then there is a subset $\mathcal{L}' \subset \mathcal{L}$ of size $n$ such that the intersection point of any two members in $\mathcal{L}'$ lies to the left of the $y$-axis, or the intersection point of any two members in $\mathcal{L}'$ lies to the right of the $y$-axis. 
\end{Lem}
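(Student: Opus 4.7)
The plan is to reduce the statement to the Erdős–Szekeres theorem, which is the standard consequence of Dilworth's theorem applied to the product order on $\mathbb{R}^2$. I will parametrize each line by its $(y\text{-intercept}, \text{slope})$ pair and translate ``intersection on the left (resp.\ right) of the $y$-axis'' into a comparability condition in this product order.

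Concretely, write each $\ell \in \mathcal{L}$ as $\ell : y = m(\ell)\,x + b(\ell)$. The hypothesis that no two lines meet at the same point on the $y$-axis is precisely that the values $b(\ell)$ are pairwise distinct. After a generic infinitesimal perturbation, chosen small enough that no intersection point crosses the $y$-axis, I may further assume the slopes $m(\ell)$ are pairwise distinct, so that every pair of lines meets at a unique finite point. Order the lines $\ell_1, \ldots, \ell_{n^2}$ so that $b(\ell_1) < b(\ell_2) < \cdots < b(\ell_{n^2})$. For any $i < j$, the $x$-coordinate of $\ell_i \cap \ell_j$ equals
\[
\frac{b(\ell_j) - b(\ell_i)}{m(\ell_i) - m(\ell_j)},
\]
whose numerator is positive; hence the intersection lies to the left of the $y$-axis iff $m(\ell_i) < m(\ell_j)$, and to the right iff $m(\ell_i) > m(\ell_j)$.

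Now regard the slope sequence $m(\ell_1), m(\ell_2), \ldots, m(\ell_{n^2})$ as a sequence of $n^2$ distinct real numbers. By the Erdős–Szekeres theorem—equivalently, by applying Dilworth's theorem to the poset on $\{1,\ldots,n^2\}$ with $i \preceq j$ iff $i \leq j$ and $m(\ell_i) \leq m(\ell_j)$, where chains are increasing slope subsequences and antichains are decreasing ones—this sequence contains a strictly monotone subsequence of length $n$, since $n^2 \geq (n-1)^2 + 1$. Take $\mathcal{L}'$ to be the $n$ lines indexed by such a subsequence: if the slopes along it strictly increase, then by the sign analysis every pair in $\mathcal{L}'$ meets to the left of the $y$-axis, while if they strictly decrease every pair meets to the right, which is exactly the required dichotomy.

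The only mildly delicate ingredient is the general-position reduction to distinct slopes, but this is standard: since there are only finitely many intersection points and all lie strictly off the $y$-axis, a sufficiently small perturbation of the slopes preserves the left/right classification of each intersection. Everything else is the one-line sign computation for the intersection $x$-coordinate followed by a direct appeal to Dilworth / Erdős–Szekeres, so I anticipate no substantive obstacle.
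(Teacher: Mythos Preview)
Your argument is correct and essentially identical to the paper's: order the lines by $y$-intercept, apply Dilworth/Erd\H{o}s--Szekeres to the slope sequence to extract a monotone subsequence of length $n$, and read off the left/right dichotomy from the sign of the intersection $x$-coordinate. The perturbation to distinct slopes is an unnecessary extra step---the paper omits it, since a weakly monotone slope subsequence already suffices (parallel pairs simply have no intersection and thus vacuously satisfy the one-sided condition).
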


\begin{proof}
Let us order the elements in $\mathcal{L}=\{\ell_1, \ldots, \ell_{n^2}\}$ from bottom to top according to their $y$-intercept. By Dilworth's Theorem \cite{dilworth1950decomposition}, $\mathcal{L}$ contains a subsequence of $n$ lines whose slopes are either increasing or decreasing. In the first case, all intersection points are to the left of the $y$-axis, and in the latter case, all intersection points are to the right of the $y$-axis.
\end{proof}

\begin{proof}[Proof of Lemma \ref{main lemma 2}.]
Let $(P, \mathcal{B} \cup \mathcal{R})$ be as described above, and let $\ell_y$ be the $y$-axis. Without loss of generality, we can assume that all lines in $\mathcal{B} \cup \mathcal{R}$ are not vertical, and the intersection point of any two lines in $\mathcal{B} \cup \mathcal{R}$ lies to the right of $\ell_y.$ Moreover, we can assume that no two lines intersect at the same point on $\ell_y.$

We start by finding a point $y_1 \in \ell_y$ such that at least $|\mathcal{B}|/2$ blue lines in $\mathcal{B}$ intersect $\ell_y$ on one side of the point $y_1$ (along $\ell_y$) and at least $|\mathcal{R}|/2$ red lines in $\mathcal{R}$ intersect $\ell_y$ on the other side. This can be done by sweeping the point $y_1$ along $\ell_y$ from bottom to top until $ct^2/2$ lines of the first color, say red, intersect $\ell_y$ below $y_1.$ We then have at least $ct^2/2$ blue lines intersecting $\ell_y$ above $y_1.$ Discard all red lines in $\mathcal{R}$ that intersect $\ell_y$ above $y_1,$ and discard all blue lines in $\mathcal{B}$ that intersect $\ell_y$ below $y_1.$ Hence, $|\mathcal{B}| \geq ct^2/2.$ 

\begin{figure}
\centering
\includegraphics[width=0.6\textwidth]{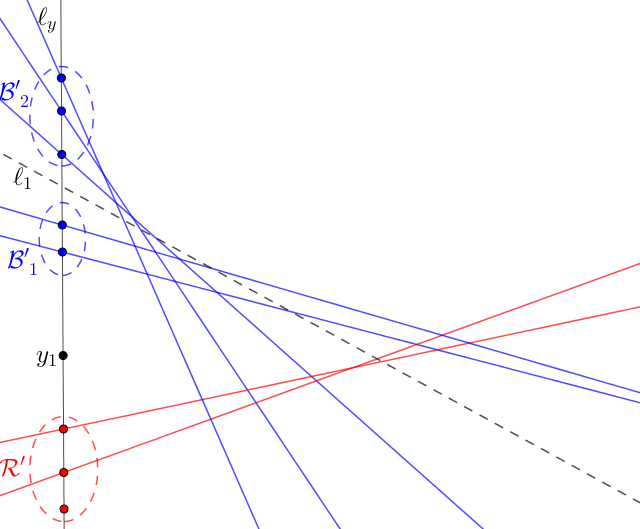}
\caption{An example for the line $\ell_1.$}
\label{Second Step}
\end{figure}

Set $s= \lfloor ct^2/4 \rfloor.$ For the remaining lines in $\mathcal{B},$ let $\mathcal{B}=\{b_1, \ldots, b_{2s}\},$ where the elements of $\mathcal{B}$ are ordered in the order they cross $\ell_y,$ from bottom to top.  We partition $\mathcal{B}= \mathcal{B}_1 \cup \mathcal{B}_2$ into two parts, where $\mathcal{B}_1=\{b_1, \ldots, b_{s}\}$ and $ \mathcal{B}_2=\{b_{s+1}, \ldots, b_{2s}\}.$ By applying an affine transformation, we can assume all lines in $\mathcal{R}$ have positive slope and all lines in $\mathcal{B}_1 \cup \mathcal{B}_2$ have negative slope. See Figure \ref{First Step}.

Let us define a $3$-partite $3$-uniform hypergraph $H=(\mathcal{R} \cup  \mathcal{B}_1 \cup \mathcal{B}_2, E),$ whose vertex parts are $\mathcal{R},\mathcal{B}_1,\mathcal{B}_2,$   and $(r,b_i,b_j) \in \mathcal{R} \times \mathcal{B}_1 \times \mathcal{B}_2$ is an edge in $H$ if and only if the intersection point $p=b_i\cap b_j$ lies above the line $r.$ Note, if $b_i$ and $b_j$ are parallel, then $(r,b_i,b_j) \notin E.$ Then a result of Fox et al.~on semi-algebraic hypergraphs implies the following (see also \cite{bukh2012space} and \cite{fox2016polynomial}). 

\begin{Lem}[Fox et al.~\cite{fox2012overlap}, Theorem 8.1]\label{Semialgebraic}
There exists a positive constant $\alpha$ such that the following holds. In the hypergraph above, there are subsets $\mathcal{R}' \subseteq \mathcal{R}, \mathcal{B}'_1 \subseteq \mathcal{B}_1, \mathcal{B}'_2 \subseteq \mathcal{B}_2,$ where $|\mathcal{R}'| \geq \alpha |\mathcal{R}|, |\mathcal{B}'_1| \geq \alpha |\mathcal{B}_1|,|\mathcal{B}'_2|\geq \alpha |\mathcal{B}_2|,$ such that either $\mathcal{R}' \times \mathcal{B}'_1 \times \mathcal{B}'_2 \subseteq E,$ or $(\mathcal{R}' \times \mathcal{B}'_1 \times \mathcal{B}'_2) \cap E= \emptyset.$ 
\end{Lem}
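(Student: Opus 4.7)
Plan: The hypergraph $H = (\mathcal{R}\cup\mathcal{B}_1\cup\mathcal{B}_2, E)$ is a semi-algebraic $3$-partite $3$-uniform hypergraph of bounded description complexity, and the lemma is an instance of the general semi-algebraic Ramsey/same-type theorem. My first step is to make the semi-algebraic structure explicit. Parametrize each non-vertical line $\ell: y = mx + c$ by the point $(m,c) \in \mathbb{R}^2$. Writing $b_i: y = m_i x + c_i$, $b_j: y = m_j x + c_j$, and $r: y = m_r x + c_r$, the intersection point $b_i \cap b_j$ has coordinates $\bigl(\tfrac{c_j-c_i}{m_i-m_j},\tfrac{m_i c_j - m_j c_i}{m_i-m_j}\bigr)$, and the predicate ``$b_i\cap b_j$ lies above $r$'' reduces, after clearing the sign-determined denominator $m_i-m_j$, to a polynomial inequality of degree at most $2$ in the six real parameters $(m_i,c_i,m_j,c_j,m_r,c_r)$. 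Thus $E$ is cut out in $(\mathbb{R}^2)^3$ by a Boolean combination of a bounded number of polynomial inequalities of bounded degree.

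Having reduced to a semi-algebraic hypergraph, I would deduce the lemma from the following general statement of Fox--Gromov--Lafforgue--Naor--Pach: for every $d,D,k$, there is $\alpha = \alpha(d,D,k)>0$ such that every $k$-partite semi-algebraic $k$-uniform hypergraph of description complexity at most $D$ on vertex sets $V_1,\dots,V_k\subseteq \mathbb{R}^d$ contains subsets $V_i'\subseteq V_i$ with $|V_i'|\geq \alpha |V_i|$ on which $V_1'\times\cdots\times V_k'$ is either entirely in $E$ or entirely disjoint from $E$. Applied with $d = 2$, $k = 3$, and the constant $D$ from the previous paragraph, this yields exactly the lemma.

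To prove the general theorem I would induct on $k$. The base case $k=2$ is the same-type lemma for bipartite semi-algebraic graphs: such graphs have VC dimension bounded in terms of $d$ and $D$ (via a Warren/Milnor--Thom sign-pattern count), and a bounded-VC-dimension bipartite graph admits a linear-size homogeneous pair by the standard density-increment / Alon--Fischer--Newman argument. For the inductive step, I would fix $v\in V_k$ and apply the $(k-1)$-case to its link, a semi-algebraic $(k-1)$-uniform hypergraph of the same bounded complexity, to extract a large homogeneous product $V_1^{(v)}\times\cdots\times V_{k-1}^{(v)}$. The crucial move is then to show that only boundedly many distinct ``link types'' arise as $v$ varies, which one does by iterating the $k=2$ argument along a polynomial partition of $V_1\times\cdots\times V_{k-1}$ to produce a bounded same-type partition; a pigeonhole picks out a positive fraction of $v\in V_k$ sharing one common homogeneous link product.

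The main obstacle is exactly this last pigeonhole: naively each $v$ produces its own large homogeneous $(k-1)$-tuple, and one needs a single product that works simultaneously for a constant fraction of $V_k$. Executing this requires building, via polynomial partitioning or cuttings combined with the base case, a single \emph{same-type} partition of $V_1\times\cdots\times V_{k-1}$ into boundedly many cells, each of which is homogeneous with respect to every $v\in V_k$ of a given type; only then can one apply pigeonhole in $V_k$. This is the technical heart of the Fox et al. argument, and it causes the constant $\alpha$ to degrade (doubly exponentially) in $k$; for our $k=3$ application, though, the loss is absorbed into the single constant $\alpha$ of the lemma statement.
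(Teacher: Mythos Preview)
The paper does not prove this lemma at all: it is stated as a black-box citation to Fox, Gromov, Lafforgue, Naor, and Pach, with pointers also to Bukh--Hubard and Fox--Pach--Suk, and is then applied directly. So there is no ``paper's own proof'' to compare against.

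Your proposal is more than the paper provides and is essentially correct as a reduction. You correctly identify that parametrizing each line by its slope--intercept pair $(m,c)\in\mathbb{R}^2$ makes the predicate ``$b_i\cap b_j$ lies above $r$'' a Boolean combination of a bounded number of polynomial inequalities of bounded degree in the six coordinates, so $H$ is a $3$-partite semi-algebraic $3$-uniform hypergraph of bounded description complexity in $\mathbb{R}^2$. Invoking the general density theorem for such hypergraphs then gives exactly the statement, with an absolute constant $\alpha$.

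One small caveat on your sketch of the general theorem: the original Fox--Gromov--Lafforgue--Naor--Pach argument for the $k$-uniform case does not quite proceed by the ``bounded number of link types via polynomial partitioning'' scheme you describe; it iterates a same-type lemma built from cuttings (or Yao--Yao partitions) rather than a VC/density-increment argument, and the later polynomial-partitioning proofs you allude to are due to Fox--Pach--Suk. Your outline of the inductive mechanism is morally right but conflates the two approaches. For the purposes of this paper none of that matters, since only the statement is used.
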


We apply Lemma \ref{Semialgebraic} to $H$ and obtain subsets $\mathcal{R}', \mathcal{B}'_1, \mathcal{B}'_2$ with the properties described above. Without loss of generality, we can assume that $\mathcal{R}' \times \mathcal{B}'_1 \times \mathcal{B}'_2 \subset E,$ since a symmetric argument would follow otherwise. Let $\ell_1$ be a line in the plane such that the following holds.

\begin{enumerate}
\item The slope of $\ell_1$ is negative.

\item All intersection points between $\mathcal{R'}$ and $\mathcal{B'}_1$ lie above $\ell_1.$

\item All intersection points between $\mathcal{R'}$ and $\mathcal{B'}_2$ lie below $\ell_1.$
\end{enumerate}
See Figure \ref{Second Step}.

\begin{Obs}\label{Existence of l_1}
Line $\ell_{1}$ defined above exists.
\end{Obs}

\begin{proof}
Let $U$ be the upper envelope of the arrangement $\bigcup_{\ell \in \mathcal{R}'} \ell,$ that is, $U$ is the closure of all points that lie on exactly one line of $\mathcal{R}'$ and strictly above exactly the $|\mathcal{R}'|-1$ lines in $\mathcal{R}'.$ 

Let $P_1$ be the set of intersection points between the lines in $\mathcal{B}'_1$ with $U.$ Likewise, we define $P_2$ to be the set of intersection points between the lines in $\mathcal{B}'_2$ with $U.$ Since $U$ is $x$-monotone and convex the set $P_2$ lies to the left of the set $P_1.$
Then the line $\ell_1$ that intersects $U$ between $P_1$ and $P_2$ and intersects $\ell_y$ between $\mathcal{B}'_1$ and $\mathcal{B}'_2$ satisfies the conditions above.\end{proof}

Now we apply Lemma \ref{Dilworth} to $\mathcal{R}'$ with respect to the line $\ell_1,$ to obtain $\sqrt{\alpha c/2} \cdot t$ members in $\mathcal{R}'$ such that every pair of them intersects on one side of $\ell_1.$ Discard all other members in $\mathcal{R}'.$ Without loss of generality, we can assume that all intersection points between any two members in $\mathcal{R}'$  lie below $\ell_1,$  since a symmetric argument would follow otherwise. We now discard the set $\mathcal{B}'_2.$ 

Notice that the order in which the lines in $\mathcal{R}'$ cross $b \in \mathcal{B}'_1$ will be the same for any line $b \in \mathcal{B}'_1.$ Therefore, we order the elements in $\mathcal{R}'=\{r_1, \ldots, r_m\}$ with respect to this ordering, from left to right, where $m=\lceil \sqrt{\alpha c/2} \cdot t \rceil.$ We define $\ell_2$ to be the line obtained by slightly perturbing the line $r_{\lfloor m/2 \rfloor}$ such that:

\begin{enumerate}
\item The slope of $\ell_2$ is positive.

\item  All intersection points between $\mathcal{B}'_1$ and $\{r_1, \ldots, r_{\lfloor m/2 \rfloor}\}$ lie above $\ell_2.$

\item All intersection points between $\mathcal{B}'_1$ and $\{r_{\lfloor m/2 \rfloor+1}, \ldots, r_{m}\}$ lie below $\ell_2.$ 
\end{enumerate}

\noindent See the Figure \ref{Fourth Step}. 

\begin{figure}
\centering
\includegraphics[width=0.7\textwidth]{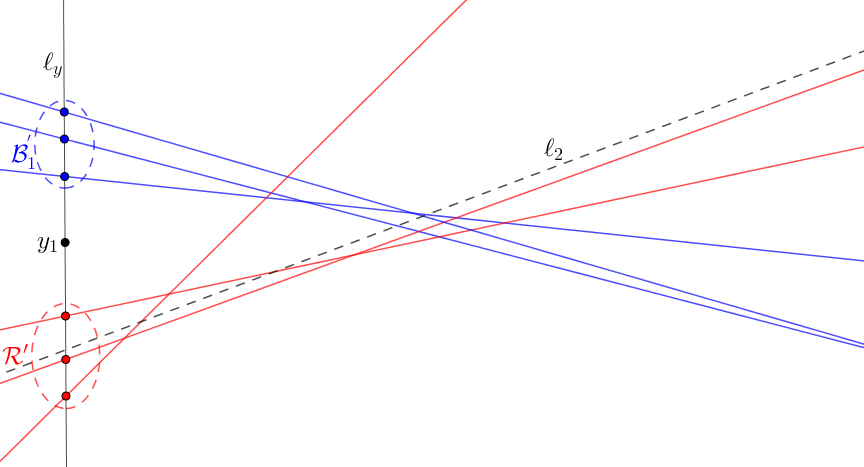}
\caption{An example for the line $\ell_2.$}
\label{Fourth Step}
\end{figure}

Finally, we apply Lemma \ref{Dilworth} to $\mathcal{B}'_1$ with respect to the line $\ell_2,$ to obtain at least $\sqrt{\alpha c} \cdot t/2$ members in $\mathcal{B}'_1$ with the property that any two of them intersect on one side of $\ell_2.$ Without loss of generality, we can assume that any two such lines intersect below  $\ell_2$ since a symmetric argument would follow. Set $\mathcal{B}^{\ast} \subset \mathcal{B}'_1$ to be these set of lines. Then $\mathcal{B}^{\ast} \cup \{r_{1}, \ldots, r_{\lfloor m/2 \rfloor}\}$ and their intersection points form a natural grid. By setting $c=c(t)$ to be sufficiently large, we obtain a natural $t \times t$ grid. \end{proof}

\section{Lower Bound Construction} \label{Lower Bound Construction}

In this section, we will prove Theorem \ref{Lower Bound}. First, let us recall the definitions of Sidon and $k$-fold Sidon sets.

Let $A$ be a finite set of positive integers. Then $A$ is a \emph{Sidon set} if the sum of all pairs are distinct, that is, the equation $x+y=u+v$ has no solutions with $x,y,u,v  \in A$, except for trivial solutions given by $u=x, y=v$ and $x=v, y=u.$ We define $s(N)$ to be the size of the largest Sidon set $A \subset \{1, \ldots, N\}.$ Erd\H{o}s and Tur\'an proved the following. 

\begin{Lem}[See \cite{erdos1941problem} and \cite{ruzsa1993solving}]\label{Sidon Set}
For $N>1,$ we have $s(N)=\Theta(\sqrt{N}).$
\end{Lem}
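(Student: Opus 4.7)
The plan is to establish the two matching bounds $s(N) = O(\sqrt{N})$ and $s(N) = \Omega(\sqrt{N})$ separately; the upper bound is a short counting argument, while the lower bound requires an explicit construction, for which I would use the classical Erdős--Turán quadratic-residue construction.

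For the upper bound, I would run the standard difference-counting argument. Observe that the Sidon condition on $A$ is equivalent to the statement that the nonzero differences $x - y$ with $x, y \in A$ and $x \neq y$ are pairwise distinct: indeed, $x - y = u - v$ rearranges to $x + v = u + y$, and the Sidon property forces $(x, y) = (u, v)$. These $|A|(|A| - 1)$ distinct differences all lie in $\{-(N-1), \ldots, -1, 1, \ldots, N-1\}$, so $|A|(|A|-1) \le 2(N-1)$, giving $|A| \le \sqrt{2N} + 1$.

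For the lower bound I would exhibit an explicit Sidon set of size $\Omega(\sqrt{N})$. By Bertrand's postulate, pick a prime $p$ with $\sqrt{N}/4 \le p \le \sqrt{N}/2$, and set
\[
A_p = \{\, 2pk + (k^2 \bmod p) : k = 0, 1, \ldots, p-1 \,\},
\]
which has $p$ elements all lying in $[0, 2p^2) \subseteq [0, N)$. To verify the Sidon property, suppose $a_i + a_j = a_k + a_l$, writing $a_m = 2pm + r_m$ with $r_m := (m^2 \bmod p) \in [0, p)$. Reducing modulo $2p$, and using that $0 \le r_i + r_j < 2p$ (and likewise on the other side), forces the equality $r_i + r_j = r_k + r_l$ of integers, and hence also $i + j = k + l$. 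Reducing modulo $p$ then yields $i^2 + j^2 \equiv k^2 + l^2 \pmod{p}$, which combined with $i + j = k + l$ gives $ij \equiv kl \pmod{p}$. Thus $\{i, j\}$ and $\{k, l\}$ are the unordered roots of the same quadratic over $\mathbb{F}_p$, and since all four values lie in $[0, p)$, this forces $\{i,j\} = \{k,l\}$.

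The main subtlety is the modular-arithmetic step in verifying the Sidon property: one has to notice that, although each residue $r_m$ lives in $[0, p)$, a sum $r_i + r_j$ can be as large as $2p - 2$, so reducing modulo $2p$ (rather than modulo $p$) is what is needed to recover $r_i + r_j = r_k + r_l$ as an equality of honest integers. Once this observation is in place, the remainder of the verification is routine finite-field algebra, and the bound $|A_p| = p = \Omega(\sqrt{N})$ is immediate from the choice of $p$.
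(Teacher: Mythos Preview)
Your proof is correct. The paper does not actually prove this lemma: it is quoted as a classical fact with citations to Erd\H{o}s--Tur\'an and Ruzsa, and no argument is supplied. Your write-up reproduces precisely the standard proofs behind those citations --- the difference-counting upper bound and the Erd\H{o}s--Tur\'an $k\mapsto 2pk+(k^2\bmod p)$ construction for the lower bound --- so there is nothing to compare beyond noting that you have filled in what the paper leaves to the literature. Two minor remarks: the step inverting $2$ modulo $p$ tacitly assumes $p$ is odd, and the construction lands in $[0,N)$ rather than $[1,N]$, but both are harmless for a $\Theta$-statement.
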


Let us now consider a more general equation. Let $u_1, \ldots, u_4$ be integers such that $u_1+u_2+u_3+u_4=0,$ and consider the equation 
\begin{align}\label{mysterious equation}
u_1x_1+u_2x_2+u_3x_3+u_4x_4=0.
\end{align}

We are interested in solutions to (\ref{mysterious equation}) with $x_1, x_2, x_3, x_4 \in \mathbb{Z}.$ Suppose $(x_1, x_2, x_3, x_4)=(a_1, a_2, a_3, a_4)$ is an integer solution to (\ref{mysterious equation}). Let $d \leq 4$ be the number of distinct integers in the set $\{a_1, a_2, a_3, a_4\}.$ Then we have a partition on the indices
\begin{align*}
\{1,2,3,4\}=T_1 \cup \cdots \cup T_{d},
\end{align*}
\noindent  where $i$ and $j$ lie in the same part $T_{\nu}$  if and only if $x_i=x_j.$ We call $(a_1, a_2, a_3, a_4)$ a \emph{trivial} solution to (\ref{mysterious equation}) if 
\begin{align*}
\sum_{i \in T_{\nu}}u_i=0,      \hspace{2cm}    \nu=1,\ldots, d.
\end{align*}
Otherwise, we will call $(a_1, a_2, a_3, a_4)$ a \emph{nontrivial} solution to (\ref{mysterious equation}).

In \cite{lazebnik2003hypergraphs}, Lazebnik and Verstra\"ete introduced $k$-fold Sidon sets which are defined as follows. Let $k$ be a positive integer. A set $A \subset \mathbb{N}$ is a \emph{$k$-fold Sidon set} if each equation of the form 
\begin{align}\label{mysterious equation2}
u_1x_1+u_2x_2+u_3x_3+u_4x_4=0,
\end{align}
where $|u_i| \leq k$ and $u_1+\cdots+u_4=0,$  has no nontrivial solutions with $x_1, x_2, x_3, x_4 \in A.$ Let $r(k,N)$ be the size of the largest $k$-fold Sidon set $A \subset \{1, \ldots, N\}.$

\begin{Lem}\label{Ruzsa's Lemma}
There is an infinite sequence $1=a_1 < a_2 < \cdots$ of integers such that \[a_m \leq 2^8k^4m^3,\] and the system of equations (\ref{mysterious equation2}) has no nontrivial solutions in the set $A=\{a_1, a_2, \ldots\}.$
In particular, for integers $N > k^4 \geq 1,$ we have $r(k,N) \geq
ck^{-{4}/{3}}{N^{{1}/{3}}},$ where $c$ is a positive constant.
\end{Lem}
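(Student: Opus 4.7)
My plan is a greedy construction. I would set $a_1 = 1$, and inductively, given that $A_m = \{a_1, \ldots, a_m\}$ is already a $k$-fold Sidon set, define $a_{m+1}$ to be the smallest integer greater than $a_m$ such that $A_m \cup \{a_{m+1}\}$ remains a $k$-fold Sidon set. The core task is to bound the ``forbidden set''
\[
F_{m+1} = \{\, x \in \mathbb{N} : A_m \cup \{x\} \text{ admits a nontrivial solution to some equation } (\ref{mysterious equation2}) \,\},
\]
and to deduce from such a bound that $a_m \leq 2^8 k^4 m^3$.

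To bound $|F_{m+1}|$ I would enumerate over the data parameterizing a nontrivial solution in which $x$ itself appears: (i) a coefficient vector $(u_1, u_2, u_3, u_4) \in [-k, k]^4$ with $\sum_i u_i = 0$, of which there are at most $(2k+1)^3 = O(k^3)$; (ii) a nonempty subset $S \subseteq \{1,2,3,4\}$ indicating the coordinates where $x_i = x$, at most $15$ choices; and (iii) an assignment $(y_i)_{i \in S^c} \in A_m^{|S^c|}$ of the remaining coordinates, at most $m^{|S^c|} \leq m^3$ choices. The equation then reads $\bigl(\sum_{i \in S} u_i\bigr) x = -\sum_{i \in S^c} u_i y_i$. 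If $\sum_{i \in S} u_i \neq 0$, this uniquely determines $x$ and so contributes at most one forbidden value. If $\sum_{i \in S} u_i = 0$, then $\sum_{i \in S^c} u_i = 0$ as well, and the condition collapses to an equation on the $y_i$'s alone; viewing this as a $4$-variable equation by adjoining a dummy variable with zero coefficient (drawn from $A_m$, which is possible once $m \geq 4$), the inductively maintained $k$-fold Sidon property of $A_m$ forces only trivial solutions, so no new $x$ is forbidden in this subcase. Summing over (i)--(iii) yields $|F_{m+1}| = O(k^3 m^3)$.

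To pass from this count to the target bound on $a_m$, I would rely on the invariant $\{1, 2, \ldots, a_m\} \subseteq A_m \cup F_m$: at each step $j$, every integer skipped in $(a_{j-1}, a_j)$ must have been in $F_j$ at that moment, and the forbidden set only grows as further elements are added to $A$, so those integers still lie in $F_m$. Hence $a_m \leq |A_m| + |F_m| = m + O(k^3 m^3) \leq 2^8 k^4 m^3$ for every $k \geq 1$ after absorbing universal constants (with the small-$m$ base cases checked by hand). The ``in particular'' statement is then immediate: given $N > k^4$, I would take $m$ to be the largest integer with $2^8 k^4 m^3 \leq N$, producing a $k$-fold Sidon subset of $\{1, \ldots, N\}$ of size $m \geq c\, k^{-4/3} N^{1/3}$. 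The main technical subtlety is the ``$\sum_{i \in S} u_i = 0$'' subcase of the enumeration: one must verify that the reduced lower-variable equation on $A_m$ cannot have a nontrivial solution, which requires re-embedding it into the $4$-variable framework and invoking the inductive hypothesis that $A_m$ is already $k$-fold Sidon.
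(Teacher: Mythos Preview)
Your proposal is correct and is essentially the paper's own argument: a greedy construction in which $a_m$ is the least integer avoiding the at most $O(k^4 m^3)$ values determined by a choice of coefficients $(u_i)$, a subset $S$ with $\sum_{i\in S}u_i\neq 0$, and old elements $(y_i)_{i\in S^c}$, together with an inductive disposal of the case $\sum_{i\in S}u_i=0$. The only cosmetic difference is that where you re-embed the reduced equation with zero-coefficient dummy variables, the paper simply replaces each occurrence of the new element by $a_1$ in the \emph{original} equation---the same trick, and one that also shows your caveat ``once $m\geq 4$'' is unnecessary.
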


The proof of Lemma \ref{Ruzsa's Lemma} is a slight modification of the proof of Theorem $2.1$ in \cite{ruzsa1993solving}. For the sake of completeness, we include the proof here.
\begin{proof}
We put $a_1=1$ and define $a_m$ recursively. Given $a_1, \ldots, a_{m-1},$ let $a_m$ be the smallest positive integer satisfying 
\begin{align}\label{recursive formula}
a_m \neq - \Big( \sum_{i \in S} u_i \Big)^{-1} \sum_{1 \leq i \leq 4, i \notin S} u_ix_i,
\end{align}
for every choice $u_i$ such that $|u_i| \leq k,$ for every set $S \subset \{1, \ldots,4\}$ of subscripts such that $\Big( \sum_{i \in S} u_i \Big) \neq 0$, and for every choice of $x_i \in \{a_1, \ldots, a_{m-1}\}$, where $i \notin S.$ For a fixed $S$ with $|S|=j,$ this excludes $(m-1)^{4-j}$ numbers. Since $|u_i|\leq k,$ the total number of excluded integers is at most 
\[({2k+1})^{4}\sum_{j=1}^{3}\binom{4}{j}(m-1)^{4-j}=({2k+1})^{4}(m^4-(m-1)^4-1) < 2^8k^4m^3.\]

Consequently, we can extend our set by an integer $a_m \leq 2^8k^4m^3.$ This will automatically be different from from $a_1, \ldots, a_{m-1},$ since putting $x_i=a_j$ for all $i \notin S$ in (\ref{recursive formula}) we get $a_m \neq a_j.$ It will also satisfy $a_m > a_{m-1}$ by minimal choice of $a_{m-1}.$

We show that the system of equations (\ref{mysterious equation2}) has no nontrivial solutions in the set $\{a_1, \ldots, a_m\}.$ We use induction on $m.$ The statement is obviously true for $m=1.$ We establish it for $m$ assuming for $m-1.$ Suppose that there is a nontrivial solution $(x_1,x_2,x_3,x_4)$ to (\ref{mysterious equation2}) for some $u_1, u_2, u_3, u_4$ with the properties described above. Let $S$ denote the set of those subscripts for which $x_i=a_m.$ If $\sum_{i \in S}u_i \neq 0,$ then this contradicts (\ref{recursive formula}). If $\sum_{i \in S}u_i = 0,$ then by replacing each occurrence of $a_m$ by $a_1,$ we get another nontrivial solution, which contradicts the induction hypothesis.
\end{proof}

For more problems and results on Sidon sets and $k$-fold Sidon sets, we refer the interested reader to \cite{lazebnik2003hypergraphs, ruzsa1993solving, cilleruelo2014k}.

We are now ready to prove Theorem \ref{Lower Bound}.

\begin{proof}[Proof of Theorem \ref{Lower Bound}.] We start by applying Lemma \ref{Sidon Set} to obtain a Sidon set $M \subset [n^{1/{7}}],$ such that $|M|=\Theta(n^{1/{14}}).$ We then apply Lemma \ref{Ruzsa's Lemma} with $k=n^{1/7}$ and $N=\frac{1}{4}n^{{11}/{14}},$ to obtain a $k$-fold Sidon set $A \subset [N]$ such that \[|A| \geq cn^{1/14} ,\] where $c$ is defined in Lemma \ref{Ruzsa's Lemma}. Without loss of generality, let us assume $|A|=cn^{1/14}.$

Let $P= \{(i,j)\in \mathbb{Z}^2: i \in A, 1 \leq j \leq n^{13/14}\},$ and let  $\mathcal{L}$ be the family of lines in the plane of the form $y=mx+b, $ where $m \in M$ and  $b$ is an integer such that $1 \leq b \leq n^{13/14}/2.$ 

\noindent Hence, we have
\begin{align*}
|P|&=|A| \cdot n^{13/14}=\Theta(n),\\
|\mathcal{L}|&=|M| \cdot \frac{n^{13/14}}{2}=\Theta(n).
\end{align*}

\noindent Notice that each line in $\mathcal{L}$ has exactly $|A|=cn^{1/14}$ points from $P$ since $1 \leq b \leq n^{13/14}/2.$ Therefore,
\begin{align*}
|I(P,\mathcal{L})|&=|\mathcal{L}||A|=\Theta(n^{1+1/14}).
\end{align*}

\begin{Claim}
There are no four distinct lines ${\ell}_1, {\ell}_2,{\ell}_3,{\ell}_4 \in \mathcal{L}$ and four distinct points $p_1, p_2, p_3,p_4 \in P$ such that ${\ell}_1 \cap {\ell}_2 =p_1, {\ell}_2 \cap {\ell}_3=p_2, {\ell}_3 \cap {\ell}_4=p_3, {\ell}_4 \cap {\ell}_1= p_4.$ 
\end{Claim}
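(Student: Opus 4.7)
Suppose for contradiction such a configuration exists. Write $\ell_i : y = m_i x + b_i$ with $m_i \in M$, and $p_i = (x_i, y_i)$ with $x_i \in A$. The plan is to condense the four incidence conditions into a single linear Diophantine identity in the $x_i$'s, and then exploit the $k$-fold Sidon property of $A$ together with the Sidon property of $M$ to reach a contradiction. Each intersection $p_i = \ell_i \cap \ell_{i+1}$ (indices mod $4$, with $\ell_5 := \ell_1$) gives $(m_i - m_{i+1}) x_i = b_{i+1} - b_i$. Summing these four equations, the right-hand sides telescope to $0$, producing
\[ u_1 x_1 + u_2 x_2 + u_3 x_3 + u_4 x_4 = 0, \qquad u_i := m_i - m_{i+1}. \]
Here $u_1 + u_2 + u_3 + u_4 = 0$ and, since $M \subseteq [n^{1/7}]$ and $k = n^{1/7}$, each $|u_i| \leq k$.

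Because $(x_1, x_2, x_3, x_4) \in A^4$ and $A$ is $k$-fold Sidon, Lemma \ref{Ruzsa's Lemma} forces the solution to be \emph{trivial}: letting $\{1,2,3,4\} = T_1 \cup \cdots \cup T_d$ be the partition defined by $x_i = x_j$, we must have $\sum_{i \in T_\nu} u_i = 0$ for every $\nu$. The decisive preliminary observation is that each line $\ell_{i+1}$ is non-vertical and passes through the two distinct points $p_i$ and $p_{i+1}$, so $x_i \neq x_{i+1}$ for every $i \pmod 4$; consequently this partition can never place two cyclically consecutive indices in the same class.

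This leaves only three possibilities: (i) $d = 4$; (ii) $d = 2$ with parts $\{1,3\}$ and $\{2,4\}$; or (iii) $d = 3$ with the doubleton equal to $\{1,3\}$ or $\{2,4\}$. In (i) triviality forces every $u_i = 0$, so all $m_i$ coincide and no $\ell_i \cap \ell_{i+1}$ exists. In (iii) each singleton class forces the corresponding $u_i$ to vanish individually, making a pair of consecutive lines parallel and destroying one of the $p_i$. Case (ii) is where $M$ enters: triviality gives $u_1 + u_3 = 0$, i.e.\ $m_1 + m_3 = m_2 + m_4$, and since $M$ is Sidon this forces $\{m_1, m_3\} = \{m_2, m_4\}$ as multisets, so either $m_1 = m_2$ and $m_3 = m_4$, or $m_1 = m_4$ and $m_2 = m_3$. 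In either case some $m_i = m_{i+1}$, again contradicting the existence of $\ell_i \cap \ell_{i+1}$. The main obstacle is purely bookkeeping --- enumerating the admissible trivial partitions --- and once the telescoping identity and both Sidon hypotheses are on the table, every case closes in a single line.
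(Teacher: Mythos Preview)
Your argument is correct and follows the same core strategy as the paper: telescope the four incidence equations to obtain $\sum u_i x_i = 0$ with $u_i = m_i - m_{i+1}$, invoke the $k$-fold Sidon property of $A$ to force triviality, and then case-split on the equality pattern of the $x_i$, using the ordinary Sidon property of $M$ to kill the $\{1,3\},\{2,4\}$ partition.

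The one organizational difference is worth noting. The paper runs through the patterns $d=1$, $d=2$ (three subcases) directly, using ``$\ell_{i+1}$ is non-vertical and contains $p_i,p_{i+1}$'' inside individual cases to derive contradictions; its write-up in fact does not explicitly dispatch the $d=3$ and $d=4$ partitions. You instead make the observation $x_i\neq x_{i+1}$ once at the outset, which immediately rules out every partition with a cyclically consecutive block and leaves only the three cases you list. This is cleaner bookkeeping and closes the $d=3,4$ cases that the paper leaves implicit, but the underlying ideas are identical.
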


\begin{proof}
For the sake of contradiction, suppose there are four lines $\ell_1, \ell_2, \ell_3, \ell_4$ and four points $p_1, p_2,p_3, p_4$ with the properties described above. Let $\ell_i=m_ix+b_i$ and let $p_i=(x_i,y_i).$
Therefore, 
\begin{equation*}
{\ell}_1 \cap {\ell}_2=p_1=(x_1, y_1),
\end{equation*}
\begin{equation*}
{\ell}_2 \cap {\ell}_3=p_2=(x_2, y_2),
\end{equation*}
\begin{equation*}
{\ell}_3 \cap {\ell}_4=p_3=(x_3, y_3),
\end{equation*}
 \begin{equation*}
{\ell}_4 \cap {\ell}_1=p_4=(x_4, y_4).
 \end{equation*}

\noindent Hence,

\begin{align*}
p_1 &\in {\ell}_1, {\ell}_2  \hspace{3mm} \Longrightarrow (m_1-m_2)x_1+b_1-b_2=0,
\end{align*}
\begin{align*}
 p_2 &\in {\ell}_2, {\ell}_3  \hspace{3mm}  \Longrightarrow (m_2-m_3)x_2+b_2-b_3=0,
\end{align*}
\begin{align*}
p_3 &\in {\ell}_3, {\ell}_4 \hspace{3mm}   \Longrightarrow  (m_3-m_{4})x_3+b_3-b_{4}=0,
\end{align*}
\begin{align*}
p_4 &\in {\ell}_4, {\ell}_1 \hspace{3mm}  \Longrightarrow (m_{4}-m_1)x_4+b_{4}-b_1=0.
\end{align*}

\noindent By summing up the four equations above, we get 
\begin{align*}
(m_1-m_2)x_1+(m_2-m_3)x_2+(m_3-m_4)x_3+(m_4-m_1)x_4=0.
\end{align*} 

\noindent By setting $u_1=m_1-m_2, u_2=m_2-m_3, u_3=m_3-m_4,  u_4=m_4-m_1,$ we get

\begin{align}\label{final}
u_1x_1+u_1x_2+u_3x_3+u_4x_4=0,
\end{align}

\noindent  where $u_1+u_2+u_3+u_4=0$ and $|u_i| \leq n^{1/{7}}.$ Since $x_1, \ldots, x_4 \in A,$ $(x_1, x_2,x_3,x_4)$ must be a trivial solution to (\ref{final}). The proof now falls into the following cases, and let us note that no line in $\mathcal{L}$ is vertical.

\medskip

\noindent \emph{Case 1}. Suppose $x_1=x_2=x_3=x_4.$ Then $\ell_i$ is vertical and we have a contradiction.

\medskip

\noindent \emph{Case 2}. Suppose $x_1=x_2=x_3 \neq x_4$ and $u_1+u_2+u_3=0$ and $u_4=0.$ Then $\ell_1$ and $\ell_4$ have the same slope which is a contradiction. The same argument follows if $x_1=x_2=x_4 \neq x_3,$ $
x_1=x_3=x_4 \neq x_2,$ or $x_2=x_3=x_4 \neq x_1.$

\medskip

\noindent \emph{Case 3}. Suppose $x_1=x_2 \neq x_3=x_4,$ $u_1+u_2=0,$ and $u_3+u_4=0.$ Since $
p_1,p_2 \in \ell_2$ and $x_1=x_2,$ this implies that $\ell_2$ is vertical which is a contradiction.  A similar argument follows if $x_1=x_4 \neq x_2=x_3,$ $u_1+u_4=0,$ and $u_2+u_3=0.$
\medskip

\noindent \emph{Case 4}. Suppose $x_1=x_3 \neq x_2=x_4,$ $u_1+u_3=0,$ and $u_2+u_4=0.$ Then $u_1+u_3=0$ implies that $m_1+m_3=m_2+m_4.$ Since $M$ is a Sidon set, we have either $m_1=m_2$ and $m_3=m_4$ or $m_1=m_4$ and $m_2=m_3.$  The first case implies that $\ell_1$ and $\ell_2$ are parallel which is a contradiction, and the second case implies that $\ell_2$ and $\ell_3$ are parallel, which is again a contradiction. \end{proof}  

This completes the proof of Theorem \ref{Lower Bound}.\end{proof} 

\section{Concluding Remarks}
\begin{itemize}

\item An old result of Erd\H{o}s states that every $n$-vertex graph that does not contain a cycle of length $2k,$ has $O_k(n^{1+1/k})$ edges. It is known that this bound is tight when $k=2, 3,$ and $5$, but it is a long standing open problem in extremal graph theory to decide whether or not this upper bound can be improved for other values of $k.$ Hence, Erd\H{o}s's upper bound of $O(n^{5/4})$ when $k=4$ implies Theorem \ref{main theorem} when $t=2$ and $m=n.$ It would be interesting to see if one can improve the upper bound in Theorem \ref{main theorem} when $t=2.$ For more problems on cycles in graphs, see \cite{verstraete2016extremal}.

\item The proof of Lemma \ref{main lemma 2} is similar to the proof of the main result in \cite{aronov1994crossing}. The main difference is that we use the result of Fox et al.~\cite{fox2012overlap} instead of the Ham-Sandwich Theorem. We also note that a similar result was established by Dujmovi\'c and Langerman (see Theorem $6$ in \cite{Dujmovic2013}).

\item Recently, Tomon and the second author \cite{ST20} improved the lower bound in Theorem \ref{Lower Bound} to $n^{9/8  + o(1)}$, and more generally, gave a construction of $n$ points and $n$ lines in the plane with no $k\times k$ grid and with at least $n^{4/3 - \Theta(1/k)}$ incidences.

\end{itemize}

\vspace{1mm}
\bibliographystyle{acm}
\bibliography{bibfile}
\vspace{1mm}

\end{document}